\title[The Euler-Monge-Amp\`ere system]{
  Critical threshold for global regularity of Euler-Monge-Amp\`ere system with
  radial symmetry}
\author[Eitan Tadmor]{Eitan Tadmor}
\address[Eitan Tadmor]{\newline Department of Mathematics  
 and Institute for Physical Science \& Technology, \ University of Maryland, College Park, MD 20742, USA}
\email{tadmor@umd.edu}
\author[Changhui Tan]{Changhui Tan}
\address[Changhui Tan]{\newline Department of Mathematics, \ 
 University of South Carolina, Columbia, SC 29208, USA}
\email{tan@math.sc.edu}
\subjclass[2010]{35Q35, 35B30, 35K96, 76N10.}
\keywords{Eulerian-Monge-Amp\`ere system, critical threshold, radial
symmetry}
\newtheorem{theorem}{Theorem}[section]
\newtheorem{lemma}[theorem]{Lemma}
\newtheorem{corollary}[theorem]{Corollary}
\newtheorem{proposition}[theorem]{Proposition}
\newtheorem{remark}[theorem]{Remark}
\newtheorem{definition}{Definition}[section]
\renewcommand{\geq}{\geqslant}
\renewcommand{\ge}{\geqslant}
\renewcommand{\leq}{\leqslant}
\def\R{\mathbb{R}}
\def\pa{\partial}
\def\bG{\mathbf{\Gamma}}
\def\F{\mathbf{F}}
\def\f{f}
\def\U{\mathbf{U}}
\def\u{\mathbf{u}}
\def\v{\mathbf{v}}
\def\x{\mathbf{x}}
\def\X{\mathbf{X}}
\def\grad{\nabla}
\def\div{\grad\cdot}
\def\tr{\text{trace}}
\def\Id{\mathbb{I}}
\def\tilrho{\rho} %intsead of \widetilde{\rho}
\def\d{\textnormal{d}}
\numberwithin{equation}{section}
\begin{document}
%%%%%%%%%%%%%%%%
\allowdisplaybreaks

\thanks{\textbf{Acknowledgment.} Research  was supported in part  by ONR grant N00014-2112773 (ET) and by NSF grants DMS 18-53001 and 21-08264 (CT)}
\date{\today}

\begin{abstract}
 We study the global wellposedness of the Euler-Monge-Amp\`ere (EMA)
 system. We obtain a sharp, explicit critical threshold in the space of initial configurations which guarantees the global regularity of EMA
 system with radially symmetric initial data. The result is obtained using two independent approaches --- one using \emph{spectral dynamics} of Liu \& Tadmor \cite{liu2002spectral} and another based on the \emph{geometric approach}  of Brenier \& Loeper \cite{brenier2004geometric}. The results are extended to 2D radial EMA with swirl.
\end{abstract}

\maketitle %\centerline{\date}

\setcounter{tocdepth}{1}
%\vspace{-.2in}
{\small\tableofcontents}

\section{Introduction}\label{sec:intro}
We are concerned with the global regularity of the pressureless Euler-Monge-Amp\`ere (EMA) system
\begin{subequations}\label{eqs:EMA}
\begin{align}
 \pa_t\rho+\div(\rho\u)&=0,\label{eq:density}\\
 \pa_t(\rho\u)+\div(\rho\u\otimes\u)&=-\kappa\rho\grad\phi,\label{eq:momentum}\\
 det(\Id-D^2\phi)&=\rho\label{eq:MA}
\end{align}
\end{subequations}
with density $\rho(\cdot,t): {\mathbb R}^n \mapsto {\mathbb R}_+$,
velocity $\u(\cdot,t): {\mathbb R}^n \mapsto {\mathbb R}^n$ and
potential $\phi(\cdot,t): {\mathbb R}^n \mapsto {\mathbb R}$, subject
to the corresponding initial conditions
$(\rho_0(\cdot),\u_0(\cdot),\phi_0(\cdot))$ at $t=0$.
We set the constant $\kappa>0$, representing a repulsive
force. Without loss of generality, we fix the potential assuming $\phi(0)=0$.

The EMA system \eqref{eqs:EMA} has been introduced and studied by Loeper in
\cite{loeper2005quasi}, around its equilibrium state
 $({\rho},\u)=(1,{\mathbf 0})$. It is closely related to the 
Euler-Poisson equations in plasma physics
\begin{subequations}\label{eqs:EP}
%\begin{equation}\label{eq:EP}
\begin{align}\label{eq:EP}
 \pa_t{\tilrho}+\div({\tilrho}\u)&=0,\\
 \pa_t({\tilrho}\u)+\div({\tilrho}\u\otimes\u)&=-\kappa{\tilrho}\,\grad\phi,\\
 -\Delta \phi&={\tilrho}-1.\label{eq:Poisson}
\end{align}
%\end{equation}
\end{subequations}
Indeed, these two systems are the same when $n=1$. In higher
dimensions, $n\geq2$, one considers a  perturbed solution  around the equilibrium
state $(1,{\mathbf 0})$: expressing $\phi=\epsilon\varphi$, then
\[
\rho=det(\Id-\epsilon D^2\varphi)= 1-\epsilon \Delta \varphi +
{\mathcal O}(\epsilon^2),
\]
which yields \eqref{eq:Poisson} modulo ${\mathcal O}(\epsilon^2)$ terms.
%\footnote{If the total mass is 1 then no further re-scaling is needed, and I take $\widetilde{\rho}$ as $\rho$. Actually - there is an issue when integrating here over the whole space...}.
Hence, we can view the EMA system \eqref{eqs:EMA} as a nonlinear
counterpart of the Euler-Poisson equations \eqref{eqs:EP} around the equilibrium state.
Interestingly, if we scale $\kappa=\epsilon^{-2}$, both systems
converges to the incompressible Euler equations as $\epsilon\to0$, e.g.,  \cite{donatelli2007quasineutral,donatelli2014quasineutral}.

The stability of Euler-Poisson equations near the equilibrium state
$(\rho,\u)=(1,{\mathbf 0})$ was analyzed in
\cite{guo1998smooth,guo2011global,ionescu2013euler,guo2017absence}.
The question of  global regularity holds for a larger region in the space of initial configurations: \emph{sub-critical}  initial data admits global strong solutions while \emph{super-critical} initial
data lead to finite time singularity formations. This is known as the
\emph{critical threshold phenomenon}.
Threshold conditions for Euler-Poisson equations were found
in \cite{engelberg2001critical,liu2002spectral,tadmor2008global}
for the one-dimensional cases, and in
\cite{wei2012critical,jang2012two,tan2021eulerian}
for the multi-dimensional cases with radial symmetry.
The search for multi-dimensional threshold beyond the radial case was addressed in related \emph{restricted} models
\cite{tadmor2003critical,lee2013thresholds,lee2017upper}, but the
general unrestricted case for \eqref{eqs:EP} is left open.

In this paper we study the global regularity of the EMA system
\eqref{eqs:EMA} with radial symmetry, subject to sub-critical initial
data. We obtain a \emph{sharp} and \emph{explicit} critical threshold
in the space of initial configurations,  which distinguish between initial data admitting globally regular solutions  vs. solutions with finite-time
blowup. We state our main result.

\begin{theorem}\label{thm:main}
  Consider the EMA system \eqref{eqs:EMA} with smooth radial initial data of the form
  \begin{equation}\label{eq:radialinit}
    \rho_0(\x)=\rho_0(r),\quad \u_0(\x)=\frac{\x}{r}u_0(r),\quad
    \phi_0(\x)=\phi_0(r),\quad r=|\x|.
  \end{equation}
  Specifically, our smoothness assumption requires
  \[
  \U_0(|\x|) := \left[u_0'(|\x|), \,\,\frac{u_0(|\x|)}{|\x|}, \,\,\phi_0''(|\x|),\,\,
    \frac{\phi_0'(|\x|)}{|\x|} \right]^\top\in \big(H^s(\R^n)\big)^4, \qquad s> \frac{n}{2}.
    \]
  Then,
  \begin{itemize}
 \item Subcritical threshold: if the initial condition satisfies
   \begin{equation}\label{eq:CTcond}
     |u_0'(r)|<\sqrt{\kappa(1-2\phi_0''(r))},\qquad \mbox{for all }  r>0,
   \end{equation}
   then the system admits a global smooth solution
   \[
   \U=\left[u'(|\x|,t), \,\,\frac{u(|\x|,t)}{|\x|}, \,\,\phi''(|\x|,t),\,\,
    \frac{\phi'(|\x|,t)}{|\x|} \right]^\top\in
   C([0,T], (H^s(\R^n))^4),
   \]
   for any finite time $T$.
 \item Supercritical threshold: if  \eqref{eq:CTcond} fails to hold for some $r>0$
    then system \eqref{eqs:EMA} admits a solution which will generate singular shocks (and/or
   non-physical shocks) in a
   finite time, namely --- there exist a finite critical time $T_c$ and and a
     location $r_c=r(T_c; r_0)$ such that the solution remains smooth in $[0, T_c)$, and
     \begin{equation}\label{eq:singularshock}
       \lim_{t\to T_c-}\pa_ru(r_c,t)=-\infty,\quad \lim_{t\to
         T_c-}\rho(r_c,t)=+\infty~ ( or \ 0 ).
     \end{equation}
 \end{itemize}
\end{theorem}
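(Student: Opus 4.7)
The plan is to pass to Lagrangian coordinates along radial characteristics and reduce the global-regularity question to a scalar, linear ODE that can be solved in closed form. Writing $\u(\x,t)=\frac{\x}{r}u(r,t)$ and $\phi=\phi(r,t)$ with $r=|\x|$, the tensors $\grad\u$ and $D^2\phi$ share a common radial eigenbasis: the radial eigenvalues are $\lambda_1:=u'(r,t)$, $\mu_1:=\phi''(r,t)$, and the tangential eigenvalues $\lambda_2:=u/r$, $\mu_2:=\phi'/r$ have multiplicity $n-1$. Along a trajectory $\dot r=u(r,t)$ issued from $r_0>0$, differentiating the radial momentum equation $u_t+uu_r=-\kappa\phi'$ in $r$ yields the Riccati relation
\[
 \dot\lambda_1+\lambda_1^2 = -\kappa\,\mu_1 ,
\]
and the entire challenge is to close this equation with an evolution equation for $\mu_1$.

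The closure would come from the Monge--Amp\`ere constraint~\eqref{eq:MA} combined with mass transport. I would introduce the dual radial variable $R(r,t):=r-\phi'(r,t)=r(1-\mu_2)$, the radial representative of the Brenier--Loeper map $\Psi(\x):=\x-\grad\phi$. A direct computation gives $(1-\mu_1)(1-\mu_2)^{n-1}=R^{n-1}\pa_r R/r^{n-1}$, so~\eqref{eq:MA} integrates to
\[
 R(r,t)^n \;=\; n\int_0^r \rho(s,t)\,s^{n-1}\,\d s \;=:\; n\,m(r,t).
\]
Integrating~\eqref{eq:density} shows that the enclosed mass $m$ is transported along characteristics, $\pa_t m+u\,\pa_r m=0$, so $R(r(t),t)\equiv R_0:=r_0-\phi_0'(r_0)$ is invariant along each trajectory. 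Taking the material derivative of this invariant gives $\pa_t\phi'=u(1-\mu_1)$, and one further $\pa_r$ supplies the missing closure
\[
 \dot\mu_1 \;=\; \lambda_1(1-\mu_1) .
\]

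With this closure the radial pair $(\lambda_1,\mu_1)$ decouples from the tangential one and reproduces the one-dimensional Euler--Poisson system of Liu--Tadmor~\cite{liu2002spectral}. The ansatz $\lambda_1=\dot w/w$ with $w(0)=1$ linearises it into the harmonic oscillator
\[
 \ddot w + \kappa\,w = \kappa\,\beta_0 ,\qquad w(0)=1,\ \dot w(0)=u_0'(r_0),\quad \beta_0:=1-\phi_0''(r_0),
\]
whose explicit solution $w(t)=\beta_0+(1-\beta_0)\cos(\sqrt\kappa\,t)+\tfrac{u_0'(r_0)}{\sqrt\kappa}\sin(\sqrt\kappa\,t)$ stays strictly positive for every $t\geq 0$ iff the amplitude inequality $\beta_0^2>(1-\beta_0)^2+(u_0'(r_0))^2/\kappa$ holds, which simplifies precisely to the threshold~\eqref{eq:CTcond}. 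A book-keeping check identifies $w(t)$ with the radial Jacobian $\pa r(t;r_0)/\pa r_0$, so the spectral picture agrees with the Brenier--Loeper~\cite{brenier2004geometric} geometric one, in which the particle flow itself solves the explicit sinusoid $\ddot r=-\kappa(r-R_0)$.

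For the subcritical conclusion, uniform positivity of $w$ in $r_0$ gives $L^\infty$-bounds on $\lambda_1$ and $\mu_1$; the tangential pair is tamed via $\mu_2=1-R_0/r(t)$ and $\lambda_2=\dot r/r$ once $r(t;r_0)$ is shown to stay away from zero. A Cauchy--Schwarz step on the averaging identities $u_0(r)/r=\tfrac{1}{r}\int_0^r u_0'\,\d s$ and $\phi_0'(r)/r=\tfrac{1}{r}\int_0^r \phi_0''\,\d s$ shows that~\eqref{eq:CTcond} self-upgrades to its tangential analogue $|u_0/r|^2<\kappa(1-2\phi_0'/r)$, which is exactly what prevents characteristics from collapsing to the origin. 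A standard quasilinear continuation in $H^s$ (using that $H^s(\R^n)$ is a Banach algebra for $s>n/2$) then promotes these pointwise bounds to the claimed global $H^s$-regularity of $\U$. For the supercritical direction, pick $r_0$ violating~\eqref{eq:CTcond}, let $T_c$ be the first zero of the explicit $w$, and set $r_c=r(T_c;r_0)$: then $\lambda_1=\dot w/w\to-\infty$ and $1-\mu_1=\beta_0/w\to+\infty$, so $\rho\to+\infty$, while in the degenerate vacuum subcase $\beta_0=0$ one has $\rho\equiv 0$ and the same Riccati blow-up of $\lambda_1$, matching~\eqref{eq:singularshock}. The step I anticipate needing the most care is not the closed-form ODE analysis itself but the $H^s$-bootstrap near $r=0$, where the singular factors $1/r$ in $u/r$ and $\phi'/r$ must be reconciled with the radial-Sobolev compatibility of the initial data; this should follow from standard radial-Sobolev lemmas once the Lagrangian bounds derived above are in place.
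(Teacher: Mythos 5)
Your proposal is correct and follows essentially the same route as the paper: the closed Riccati--type systems for $(u_r,\phi_{rr})$ and $(u/r,\phi_r/r)$ obtained from the Lagrangian invariance of $r-\phi_r$ (equivalently, the transported enclosed mass), linearized via $\lambda_1=\dot w/w$ into the oscillator $\ddot w+\kappa w=\kappa\beta_0$ --- your $w$ coincides, up to the factor $\beta_0$, with the paper's substitution $v=1/(1-\mu)$ and with the radial Jacobian of the explicit flow map in the paper's geometric approach. The only minor variations are that you derive $\dot\mu_1=\lambda_1(1-\mu_1)$ by differentiating the invariant in $r$ rather than through the density equation combined with the Monge--Amp\`ere relation, and that you control the tangential pair by a Jensen/Cauchy--Schwarz upgrade of the threshold, where the paper uses the simpler averaging bounds $|u/r|\leq\|\partial_r u\|_{L^\infty}$ and $|\phi_r/r|\leq\|\partial_r^2\phi\|_{L^\infty}$.
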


\noindent
A couple of remarks is in order.
\begin{remark}[{\bf Uniqueness}]
We note that the uniqueness of our radial solution, $U(|\x|,t)$, is dictated by its vanishing behavior at infinity. In particular, the $H^s$-boundedness of $\phi_{rr}(r,t)$ and $\phi_r(r,t)/r$ imply their vanishing behavior at infinity, hence $\phi$ is dictated up to a constant by the   Monge-Amp\`ere equation (see its radial version in \eqref{eq:rhonurelation} below), which we fixed by setting, say, $\phi(0)=0$. Furthermore,  \eqref{eq:rhonurelation} then implies that $(\rho,\u)$ approaches the equilibrium state $(1,{\mathbf 0})$ at infinity.
\end{remark}

\begin{remark}[{\bf Bounded away from vacuum}]  We observe, consult  Remark \ref{rem:rhom} below, that for the $n$-dimensional  EMA threshold   \eqref{eq:CTcond} to hold, necessitates the  lower bound $\rho_0(r)>2^{-n}$. This is in agreement with the sharp 1D threshold $|u'_0(r)|<\sqrt{\kappa(2\rho_0(r)-1)}$ which requires the lower-bound $\rho_0>1/2$ --- one cannot expect a global smooth solution with initial density which is  `way below'  the equilibrium state $\rho_0\equiv 1$. In particular, therefore, the presence of
vacuum in the initial data will always lead to shock formations.
On the other hand, if $\rho_0$ is not far below from the constant equilibrium
state $\rho_0\equiv1$, then one can find subcritical initial data with
$|u_0'|$ small enough, such that the solutions exist globally in time.
\end{remark}

The proof of theorem \ref{thm:main} begins in section 
\ref{sec:radial} with a general framework established in
\cite{tan2021eulerian} on Eulerian dynamics with radial symmetry, followed by the spectral dynamics of radial EMA  in Section
\ref{sec:threshold}, and in Section \ref{sec:GWP} we complete the proof 
 of Theorem \ref{thm:main} by energy estimates. 

When the dimension $n=1$, the Monge-Amp\`ere equation \eqref{eq:MA} is simply
 $1-\phi''=\rho$. The sub-critical global regularity condition
 \eqref{eq:CTcond} is reduced to $|u'_0(r)|<\sqrt{\kappa(2\rho_0(r)-1)}$,
and our result recovers the
sharp threshold for 1D Euler-Poisson equations obtained in
\cite{engelberg2001critical}. The interesting part of theorem \ref{thm:main} comes in higher dimensions, $n\geq2$,
addressing the fully nonlinear  Monge-Amp\`ere part  of the  EMA system, which  seems  more difficult to treat when compared with the linear Poisson part in Euler-Poisson equations \eqref{eqs:EP}. 
Nevertheless,  sharp  threshold conditions for the radially symmetric Euler-Poisson equations  --- consult \cite{wei2012critical} for $u_0>0$ and 
\cite{tan2021eulerian} for general radial data, 
are stated implicitly and seem to depend on the dimension.
This is in contrast  to the explicit form of our threshold condition for the fully nonlinear EMA system,  \eqref{eq:CTcond}, which  is independent of the dimension $n$.\newline
Another perspective for having such a simple elegant threshold condition is due
to the geometric structure of the Monge-Amp\`ere equation.
In Section \ref{sec:geometric}, we pursue  the geometric approach
for the  Monge-Amp\`ere equation a la \cite{brenier2004geometric, loeper2005quasi}, and we re-derive the radial  threshold condition \eqref{eq:CTcond}.

In Section \ref{sec:nonradial},
we discuss  extension of these results  beyond radial configurations.
First, we further extend our result to the 2D radial EMA system with swirl.
\begin{theorem}\label{thm:2D}
  Consider the two-dimensional EMA system \eqref{eqs:EMA} with smooth radial initial
  data with swirl
  \begin{equation}\label{eq:radialinits}
    \rho_0(\x)=\rho_0(r),\quad \u_0(\x)=\frac{\x}{r}u_0(r)+\frac{\x^\perp}{r}\Theta_0(r),\quad
    \phi_0(\x)=\phi_0(r),\quad r=|\x|.
  \end{equation}
  Then, there exists a set $\Sigma\subset\R^6$, defined in \eqref{def:Sigma}, such that
  \begin{itemize}
 \item Subcritical threshold: if the initial condition satisfies
   \begin{equation}\label{eq:2DCTcond}
    \Big( u_0'(r),\frac{u_0(r)}{r}, \Theta_0'(r),
    \frac{\Theta_0(r)}{r}, \phi_0''(r), \frac{\phi_0'(r)}{r}\Big)\in\Sigma
   \end{equation}
   for all $r>0$, then the system admits a global smooth solution.
 \item Supercritical threshold: if there exists an $r>0$ such that \eqref{eq:2DCTcond} is
   violated, then the solution will become singular in a finite time.
 \end{itemize}
\end{theorem}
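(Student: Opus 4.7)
The plan is to extend the spectral-dynamics scheme of Sections \ref{sec:radial}--\ref{sec:threshold} from $4$ to $6$ scalar quantities so as to incorporate the swirl. In 2D polar coordinates, writing $\u = \frac{\x}{r}u + \frac{\x^\perp}{r}\Theta$, the momentum equation splits into a radial equation that picks up a centrifugal term $\Theta^2/r$ and an angular equation $\partial_t\Theta + u\,\partial_r\Theta + u\Theta/r = 0$. The latter is equivalent to conservation of angular momentum $r\Theta$ along characteristics $\dot r = u$, which I would use to reduce one degree of freedom. The continuity equation, together with the 2D Monge--Amp\`ere relation $(1-\phi'')(1-\phi'/r)=\rho$, closes the dynamics of the potential as in Section \ref{sec:threshold}.

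Next, I would differentiate in $r$ to obtain a closed autonomous ODE system along each characteristic for the six quantities
\[
\Big(u',\,\tfrac{u}{r},\,\Theta',\,\tfrac{\Theta}{r},\,\phi'',\,\tfrac{\phi'}{r}\Big).
\]
The resulting Riccati-type system couples $(u',u/r)$ to $(\Theta',\Theta/r)$ through the centrifugal term, and to $(\phi'',\phi'/r)$ through the pressureless forcing and the Monge--Amp\`ere constraint. The set $\Sigma$ in \eqref{def:Sigma} is then defined as the maximal forward-invariant region of this system whose trajectories remain bounded for all time; its complement is the basin of blow-up.

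The subcritical statement follows by verifying that if the six quantities lie in $\Sigma$ at every $r>0$ initially, then the trajectories stay in $\Sigma$ and in particular $\|\nabla\u\|_{L^\infty}+\|D^2\phi\|_{L^\infty}$ remains bounded on any finite time interval. A Beale--Kato--Majda style energy argument, paralleling Section \ref{sec:GWP}, then promotes this $L^\infty$-control to global $H^s$ regularity. The supercritical statement follows because any initial point outside $\Sigma$ forces one of the six coordinates along its characteristic to diverge in finite time, producing either $\partial_r u\to -\infty$ or $\rho \to 0,\infty$ through the Monge--Amp\`ere identity.

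The main obstacle is the explicit identification of $\Sigma$. The clean closed-form threshold \eqref{eq:CTcond} in the irrotational case relied on a $2\times 2$ reduction along each characteristic once the $\phi$-dynamics was accounted for; here the centrifugal coupling $\Theta^2/r$ and the cross-terms from $r\Theta$-conservation destroy that clean separation, and one must analyze the full $6$-dimensional flow to delineate the invariant region. One must also rule out both $\rho\to 0$ and $\rho\to\infty$, the latter being especially delicate near $r=0$, where conservation of $r\Theta$ can concentrate angular momentum.
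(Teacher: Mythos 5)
Your proposal follows essentially the same route as the paper: decompose the velocity into radial and swirl parts, derive the closed six-variable Riccati-type ODE system along characteristics (your angular-momentum conservation $r\Theta=\mathrm{const}$ is exactly the paper's equation $(\Theta/r)'=-2q\,\Theta/r$), define $\Sigma$ as the set of initial data with globally bounded trajectories, and convert $L^\infty$ control of $\nabla\u$ into global $H^s$ regularity via the energy/BKM argument of Section \ref{sec:GWP}. Your closing observation --- that the centrifugal coupling obstructs an explicit description of $\Sigma$ --- likewise matches the paper's discussion of why only the $(q,\nu,\Theta/r)$ subsystem admits a clean invariant while the $(p,\mu,\Theta_r)$ dynamics does not close on its own.
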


Note that the set $\Sigma$ is implicitly defined. It is not clear
whether the threshold condition \eqref{eq:2DCTcond} can be expressed 
explicitly. This indicates that rotation adds another layer of
intrinsic difficulty in extending our theory to general data.\newline
Finally, we comment on the difficulties  in both --- the approach based on spectral dynamics and the geometric approach, for extending these results to the case 
of general data.

\section{Preliminaries: Eulerian dynamics with radial symmetry}\label{sec:radial}
The EMA system \eqref{eqs:EMA} falls into a general
framework of pressureless Eulerian dynamics
\begin{align*}
& \pa_t\rho+\div(\rho\u)=0,\\
& \pa_t(\rho\u)+\div(\rho\u\otimes\u)=\rho\F,
\end{align*}
where the force $\F=-\kappa\grad\phi$ and the potential $\phi$
satisfies the Monge-Amp\`ere equation \eqref{eq:MA}.
The momentum equation can be equivalently written as the
following dynamics of the velocity $\u$, in the non-vacuous region
\begin{equation}\label{eq:velo}
\pa_t\u+(\u\cdot\grad)\u=\F.
\end{equation}
The equation the usual Eulerian-type nonlinearity and it is well-known that
the uniform boundedness of the $n\times n$ velocity gradient matrix, $\|\grad\u(\cdot,t)\|_{L^\infty}< \infty$, is the key for   global regularity.
Taking the spatial gradient of \eqref{eq:velo} would yield
\begin{equation}\label{eq:gradu}
  (\pa_t+\u\cdot\grad)\grad\u+(\grad\u)^{2}=\grad\F.
\end{equation}

%%%%%%%%%%%%%%%
\subsection{Spectral dynamics}
%%%%%%%%%%%%%%%%
Let $\lambda_i=\lambda_i(\grad\u),  i=1,2,\ldots, n$ be the $n$ eigenvalues of $\grad\u$. Then, the
spectral dynamics \eqref{eq:gradu} can be written as
\begin{equation}\label{eq:lambF}
  (\pa_t+\u\cdot\grad)\lambda_i+\lambda_i^2=\langle {\mathbf l}_i, (\grad\F) {\mathbf r}_i\rangle\quad i=1,\cdots,n,
\end{equation}
where $({\mathbf l}_i, {\mathbf r}_i)$ are the corresponding left and right eigenvectors
of $\lambda_i$.

It has been studied extensively in \cite{liu2002spectral}.
Although one can largely benefit from the
explicit Ricatti structure, \eqref{eq:lambF}i, its delicate step is to control
the term on the right, $\langle {\mathbf l}_i, (\grad\F) {\mathbf r}_i\rangle$, since  in many cases $\grad\F$ need not share the same eigen-system with $\grad\u$; instead, one seeks an invariant expressed in terms of these eigen-system.
As a typical example, one study the dynamics of \emph{the divergence}
\[
d:=\div\u=\tr(\grad\u)=\sum_{i=1}^n\lambda_i(\grad\u).
\]
Taking the trace of \eqref{eq:lambF} would yield
\[
(\pa_t+\u\cdot\grad)d +\tr\left((\grad\u)^{2}\right)=\div\F.
\]
While the forcing term $\grad\cdot\F$ can be easier to control,
one loses the explicit Ricatti structure encoded in the difference,
$\tr\left((\grad\u)^{2}\right)- d^2\neq 0$ for $n\geq2$. 
The difference is related to the \emph{spectral gap},
$\lambda_1(\grad\u)-\lambda_2(\grad\u)$ (particularly in 2D).
Examples are found in \cite{tadmor2003critical,tadmor2014critical,he2017global}.

\subsection{Radially symmetric solutions}
We focus on a special type of solutions for the EMA system
\eqref{eqs:EMA}, with radial symmetry and without swirl
\begin{equation}\label{eq:radial}
  \rho(\x,t)=\rho(r,t),\quad\u(\x,t)=\frac{\x}{r}u(r,t), \quad\phi(\x,t)= \phi(r,t).
\end{equation}
Here, $r=|\x|\in\R_+$ is the radial variable, and  $\rho$, $u$ and $\phi$ are scalar
functions defined in $\R_+\times\R_+$. 
To ensure regularity at the origin, we impose boundary conditions at $r=0$
\begin{equation}\label{eq:radialzero}
  \pa_r\rho(0,t)=0,\quad u(0,t)=0,\quad \pa_r\phi(0,t)=0.
\end{equation}
The persistence of such no swirl solutions follows by noting that the velocity field is induced by radial potential 
\[
  \u(\x,t)=\grad U(\x,t), \quad U(\x,t):=\int_0^{|\x|} u(s,t){\d}s
\]
in which case \eqref{eq:velo} with $\F=-\kappa\grad\phi$ is encoded as an Eikonal equation
\begin{equation}\label{eq:eikonal}
\pa_tU +\frac{1}{2}\left|\grad U\right|^2=-\kappa\phi.
\end{equation}
The gradient of \eqref{eq:eikonal} yields the momentum equation \eqref{eq:momentum}. Take the Hessian of \eqref{eq:eikonal} to recover the velocity gradient equation 
\eqref{eq:gradu}. The next lemma  which is  at heart of matter, recalls that  radial Hessians are rank-one modifications of scalar matrix,  and hence they all share the same eigenvectors, e.g. \cite[(1.3)]{shu2021anticipation}.

\ifx%%
\begin{lemma}\label{lem:eigenvalue}
  Let $\f : \mathbb{R}^n\to\mathbb{R}^n$ be a radial vector
  field  $\f(\x)=\frac{\x}{r}f(r)$ where $r=|\x|$. Then, for any
  $\x\neq {\bf 0}$, the eigen-system of   $\grad\f(\x)$ is given by
  \begin{itemize}
  \item $\lambda_1=f'(r)$ and $\v_1=\x$;
  \item $\lambda_2=\cdots=\lambda_n=\frac{f(r)}{r}$, and $span\{\v_2,\cdots, \v_n\} = \{\v \,|\,\x^\top\v=0\}$.
  \end{itemize}
\end{lemma}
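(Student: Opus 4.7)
The plan is a direct computation: write $\mathbf{f}$ in components as $f_i(\mathbf{x}) = x_i f(r)/r$ and differentiate. Using the standard identities $\partial_j r = x_j/r$ and $\partial_j(x_i/r) = \delta_{ij}/r - x_i x_j/r^3$, I would assemble the Jacobian into the clean form
\begin{equation*}
\nabla\mathbf{f}(\mathbf{x}) \;=\; \frac{f(r)}{r}\,\mathbb{I} \;+\; \Bigl(f'(r) - \frac{f(r)}{r}\Bigr)\frac{\mathbf{x}\mathbf{x}^{\top}}{r^{2}}.
\end{equation*}
This exhibits $\nabla\mathbf{f}(\mathbf{x})$ as a scalar matrix plus a symmetric rank-one perturbation, which is exactly the structure advertised in the sentence preceding the lemma.

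From this representation the eigen-system is read off immediately. The projector $\mathbf{x}\mathbf{x}^{\top}/r^{2}$ has range $\mathrm{span}\{\mathbf{x}\}$ and kernel $\mathbf{x}^{\perp}$, so any $\mathbf{v}\perp\mathbf{x}$ satisfies $\nabla\mathbf{f}(\mathbf{x})\mathbf{v} = \tfrac{f(r)}{r}\mathbf{v}$, giving the eigenvalue $f(r)/r$ with multiplicity $n-1$ and the claimed eigenspace. Applying the matrix to $\mathbf{x}$ itself yields $\nabla\mathbf{f}(\mathbf{x})\mathbf{x} = \tfrac{f(r)}{r}\mathbf{x} + \bigl(f'(r)-\tfrac{f(r)}{r}\bigr)\mathbf{x} = f'(r)\,\mathbf{x}$, which identifies the remaining eigenpair.

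There is essentially no obstacle: the only thing to be a little careful about is the hypothesis $\mathbf{x}\neq\mathbf{0}$, which is needed so that $r>0$ and $\mathbf{x}/r$ is well defined, and the implicit smoothness assumption that $f$ is differentiable at $r=|\mathbf{x}|$ (so the pointwise Jacobian exists). Beyond that, the whole lemma is an exercise in the spectral theorem for rank-one updates of a multiple of the identity, and once the decomposition above is written down, no further work is required.
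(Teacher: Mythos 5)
Your proof is correct and follows essentially the same route as the paper: both derive the decomposition $\grad\f(\x)=\frac{f(r)}{r}\,\Id+\bigl(f'(r)-\frac{f(r)}{r}\bigr)\frac{\x\x^\top}{r^2}$ and then read off the eigenpairs by applying the matrix to $\x$ and to vectors orthogonal to $\x$. No gaps.
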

\fi%% 

\begin{lemma}\label{lem:eigenvalue}
  Let $\f : \mathbb{R}^n\to\mathbb{R}$ be a radial scalar
  field  $\f(\x)=\f(|\x|)$. Then, for any
  $\x\neq {\bf 0}$, the eigen-system of   its Hessian $D^2\f(\x)$ is characterized by two distinct eigenvalues given by

\smallskip\noindent 
  $\bullet \ \displaystyle \lambda_1(D^2\f)=\f'(|\x|)$ associated with the eigenvector $\v_1=\x$;\newline
  %\item 
  $\bullet \ \displaystyle \lambda_2(D^2\f)=\cdots=\lambda_n(D^2\f)=\frac{\f(|\x|)}{|\x|}$, associated with
   $\{\v_2,\cdots, \v_n\}$ which span $\{\x^\perp\}$.
  %\end{itemize}
\end{lemma}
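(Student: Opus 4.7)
The plan is to proceed by a direct computation of the Hessian and to recognize the resulting algebraic structure. Since $\f$ depends on $\x$ only through $r=|\x|$, the chain rule gives $\partial_i \f(\x) = \f'(r)\,x_i/r$. Differentiating once more and collecting terms, I expect to obtain a representation of the form
\begin{equation*}
D^2\f(\x) \;=\; A(r)\,\frac{\x\x^\top}{r^2} \;+\; B(r)\,\Id,
\end{equation*}
where $A(r)$ and $B(r)$ are scalar functions of $r$ built from $\f'(r)$ and $\f''(r)$. This displays $D^2\f(\x)$ as a rank-one modification of a scalar matrix, which is precisely the structure quoted informally before the statement.

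From this representation the spectral decomposition is immediate. The matrix $\x\x^\top/r^2$ is the orthogonal projector onto the line $\mathrm{span}\{\x\}$, with spectrum $\{1,0,\dots,0\}$: the eigenvalue $1$ is attained at $\v_1=\x$, and the eigenvalue $0$ has multiplicity $n-1$ on the orthogonal complement $\x^\perp$. Adding the isotropic term $B(r)\Id$ shifts every eigenvalue by $B(r)$ while preserving the eigenspaces, so the radial direction $\x$ carries the eigenvalue $A(r)+B(r)$ and any $\v\in\x^\perp$ carries $B(r)$, giving exactly the two distinct eigenvalues in the statement.

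The final step is to identify $A(r)+B(r)$ and $B(r)$ with the quantities named in the lemma by reading off the coefficients produced in the Hessian computation. I do not anticipate a real obstacle here: the argument is a one-line symbolic computation followed by the elementary observation that $\x\x^\top/r^2$ is a rank-one projector. The only place to be careful is the bookkeeping in the chain rule, to ensure that the coefficients of $\x\x^\top/r^2$ and $\Id$ combine correctly so that the radial eigenvalue is the purely radial second derivative while the tangential eigenvalues reduce to the purely angular contribution.
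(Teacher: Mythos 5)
Your strategy---write $D^2\f(\x)$ as $B(r)\,\Id + A(r)\,\x\x^\top/r^2$ and read the spectrum off the rank-one projector $\x\x^\top/r^2$---is exactly the argument the paper gives; the paper merely verifies the two eigenvector identities $(\lambda_i\Id-D^2\f)\v_i=0$ directly instead of invoking the projector language, so the route is the same. The one place your proof will not close as written is the final identification step, which you yourself flagged as "bookkeeping." Carrying out the chain rule for the scalar field $\x\mapsto\f(|\x|)$ gives $\partial_i\partial_j \f = \tfrac{\f'(r)}{r}\delta_{ij}+\big(\f''(r)-\tfrac{\f'(r)}{r}\big)\tfrac{x_ix_j}{r^2}$, i.e.\ $B=\f'/r$ and $A+B=\f''$, so the eigenvalues your computation produces are $\f''(r)$ on $\mathrm{span}\{\x\}$ and $\f'(r)/r$ on $\x^\perp$---not the $\f'(r)$ and $\f(r)/r$ printed in the statement. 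The mismatch is a notational shift in the lemma rather than an error in your calculation: the displayed formula that follows the lemma, $D^2\f=\tfrac{\f(r)}{r}\Id+\tfrac{1}{r^2}\big(\f'(r)-\tfrac{\f(r)}{r}\big)\x\x^\top$, and every application in the paper (the eigenvalues of $\grad\u=D^2U$ with $U'=u$ are $u'$ and $u/r$; those of $D^2\phi$ are $\phi''$ and $\phi'/r$) show that the $\f$ appearing in the eigenvalue formulas plays the role of the radial derivative of the potential---equivalently, the lemma concerns the Jacobian of the radial gradient field $\tfrac{\x}{r}\f(r)=\nabla\!\int_0^{|\x|}\f(s)\,{\d}s$. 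Apply your computation to the antiderivative $F(r)=\int_0^r\f(s)\,{\d}s$ (so $F'=\f$, $F''=\f'$) and you recover the stated $\lambda_1=\f'$ and $\lambda_2=\cdots=\lambda_n=\f/r$ verbatim. With that single adjustment your proof is complete and coincides with the paper's.
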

Indeed, the Hessian $D^2\f$ is given by 
  \[
  D^2\f(\x)=\frac{\f(r)}{r}\,\Id+\frac{1}{r^2}\left(\f'(r)-\frac{\f(r)}{r}\right)\x\,\x^\top, \qquad r:=|\x|.
  \]
  A straightforward computation yields
  \[
  \big(\lambda_1\Id-D^2\f(\x)\big)\v_1=\left(\f'(r)-\frac{\f(r)}{r}\right)
    \left(\x-\frac{1}{r^2}\x\,\langle \x,\x\right\rangle)=0,
    \]
  and for any $\v$ such that $\langle \x,\v\rangle=0$,
  \[
  \big(\lambda_2\Id-\grad\f(\x)\big)\v=-\frac{1}{r^2}\left(\f'(r)-\frac{\f(r)}{r}\right)
    \x\,\langle \x,\v\rangle=0.
    \]

It follows that the Hessians of all radial scalar fields share the same eigen-system.
In particular,  the velocity gradient, 
$\displaystyle \grad\u(\x,t)=D^2 U(r)$, and 
forcing gradient, $\grad\F(\x,t)=-\kappa D^2\phi(r)$,
  share the same eigenvectors.
Hence, we can `diagonalize'  the spectral dynamics \eqref{eq:lambF} in terms of the distinct eigenvalues of $D^2U(r)$ and of $D^2\phi(r)$, independent of the corresponding eigenvectors 
\begin{equation}\label{eq:pq}
   \begin{cases}
    p'=-p^2-\kappa \mu,\\
    q'=-q^2-\kappa \nu.
  \end{cases}
\end{equation}
Here, $':=\pa_t+u(r,t)\pa_r$ denotes differentiation along particle paths,
$(p,q)$ denote the two distinct eigenvalues of the velocity gradient $\grad\u(r,t)$ 
\begin{equation}\label{def:pq}
  p(r,t) = \lambda_1(\grad\u(\x,t))=\pa_ru(r,t),\quad
  q(r,t) = \lambda_2(\grad\u(\x,t))=\frac{u(r,t)}{r},
\end{equation}
and $(\mu,\nu)$ are the eigenvalues of potential gradient $D^2\phi(r,t)$ 
\begin{equation}\label{def:munu}
  \mu(r,t) = \lambda_1(D^2\phi(\x,t))=\pa_r^2\phi(r,t),\quad
  \nu(r,t) = \lambda_2(D^2\phi(\x,t))=\frac{\pa_r\phi(r,t)}{r}.
\end{equation}

The following lemma shows that the boundedness of the radial derivative $p=\pa_ru(r,t)$ is sufficient to
guarantee the boundedness of $\grad\u$. 
\begin{lemma}\label{lem:gradu}
 Consider the radial velocity field
 \eqref{eq:radial},\eqref{eq:radialzero},
 where $\displaystyle \u(\x,t)=\frac{\x}{r}u(r,t)$ and $u(0,t)=0$.   Then
  \begin{equation}\label{eq:gradup}
    \|\grad\u(\cdot,t)\|_{L^\infty}\leq\|\pa_ru(r,t)\|_{L^\infty}.
  \end{equation}
\end{lemma}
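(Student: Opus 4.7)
The plan is to combine the eigenvalue characterization from Lemma \ref{lem:eigenvalue} with a fundamental-theorem-of-calculus estimate that exploits the boundary condition $u(0,t)=0$.

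First I would note that since $\u=\grad U$ with $U(\x,t)=\int_0^{|\x|}u(s,t)\d s$ radial, the velocity gradient $\grad\u=D^2U$ is symmetric. Consequently its operator norm coincides with its spectral radius, so
\[
\|\grad\u(\cdot,t)\|_{L^\infty}=\max\big\{\|p(\cdot,t)\|_{L^\infty},\,\|q(\cdot,t)\|_{L^\infty}\big\},
\]
where $p=\pa_r u$ and $q=u/r$ are the two distinct eigenvalues supplied by Lemma \ref{lem:eigenvalue} (applied to the radial potential $U$). Thus the lemma reduces to showing that $\|q(\cdot,t)\|_{L^\infty}\leq\|p(\cdot,t)\|_{L^\infty}$.

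Second, I would use the boundary condition $u(0,t)=0$ together with the smoothness of $u$ in $r$ to write
\[
q(r,t)=\frac{u(r,t)}{r}=\frac{1}{r}\int_0^r\pa_s u(s,t)\,\d s,
\]
from which $|q(r,t)|\leq \|\pa_r u(\cdot,t)\|_{L^\infty}$ follows immediately for every $r>0$. At $r=0$ the limit $\lim_{r\to 0^+}u(r,t)/r=\pa_r u(0,t)$ is already bounded by the same quantity, so no separate argument is needed at the origin.

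Combining these two steps yields $\|\grad\u(\cdot,t)\|_{L^\infty}\leq\|\pa_r u(\cdot,t)\|_{L^\infty}$, which is \eqref{eq:gradup}. There is no real obstacle here; the only point that requires a little care is the symmetry of $\grad\u$, which is what guarantees that the spectral radius controls the full matrix norm — without this observation one would only get a bound up to a dimensional constant. The potential structure $\u=\grad U$ of the no-swirl radial ansatz is exactly what delivers that symmetry for free.
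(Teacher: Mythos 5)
Your proposal is correct and follows essentially the same route as the paper: both identify $\grad\u=D^2U$ as a symmetric matrix whose only eigenvalues are $p=\pa_ru$ and $q=u/r$ (the paper phrases this via the quadratic form $\langle\grad\u\,{\mathbf w},{\mathbf w}\rangle=\theta p+(1-\theta)q$ rather than invoking spectral radius, but it is the same fact), and both then bound $|q(r,t)|\le\|p(\cdot,t)\|_{L^\infty}$ by writing $u(r,t)=\int_0^r\pa_su(s,t)\,\d s$ using $u(0,t)=0$. No gaps.
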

To verify \eqref{eq:gradup} recall that $\grad\u(\x,t)$ is given by the radial Hessian
\[
  \grad\u(\x,t) = D^2U(r)=q(r,t)\,\Id +
    \big(p(r,t)-q(r,t)\big)\,\frac{\x\,\x^\top}{r^2}, \qquad \left\{\begin{array}{l}
    p(r,t)=\pa_ru(r,t)\\
    \displaystyle q(r,t) =\frac{u(r,t)}{r},
    \end{array}\right.
    \]
and hence for arbitrary unit vector ${\mathbf w}$, 
\[
\langle \grad\u(\x,t) {\mathbf w},{\mathbf w}\rangle = \theta p+(1-\theta)q
\leq \max\{p,q\}, \qquad \theta=\frac{|\langle \x,{\mathbf w}\rangle|^2}{r^2}\in [0,1].
\]
Moreover, by \eqref{eq:radialzero}, $u(0,t)=0$ and hence
\[ |q(r,t)|=\frac{1}{r}\left|\int_0^r\pa_s u(s,t)\,{\d}s\right|\leq
  \|p(\cdot,t)\|_{L^\infty},\]
and \eqref{eq:gradup} follows from the last two inequalities.

%%%%%%%%%%%%%%%%%%%%%%%%
\section{Spectral dynamics for radial Euler-Monge-Amp\`ere system}\label{sec:threshold}
%%%%%%%%%%%%%%%%%%%%%%%%
\subsection{Thresholds for Euler-Monge-Amp\`ere system}
In this section, we aim to study the spectral dynamics \eqref{eq:pq}
of the radial EMA system. The goal is to obtain an $L^\infty$ bound on $p$.

Let us start with expressing the Monge-Amp\`ere equation \eqref{eq:MA} as
\begin{equation}\label{eq:MAspectral}
  \rho = det(\Id-D^2\phi) = \prod_{i=1}^n(1-\lambda_i(D^2\phi))=(1-\mu)
  (1-\nu)^{n-1}.
\end{equation}
From the definition \eqref{def:munu}, we observe the following
relation between $\mu$ and $\nu$
\[\mu = \pa_r(r\nu) = r\pa_r\nu + \nu.\]
Hence, we have
\[(1-\mu)(1-\nu)^{n-1}=(1-\nu)^{n}-r\pa_r\nu\,(1-\nu)^{n-1}
  =\frac{1}{nr^{n-1}}\,\pa_r\big(r^n(1-\nu)^n\big).\]
Then, the Monge-Amp\`ere equation \eqref{eq:MAspectral} amounts to
\begin{equation}\label{eq:rhonurelation}
  \pa_r\big(r^n(1-\nu)^n\big) = n r^{n-1}\rho.
\end{equation}

\begin{lemma}\label{lem:rhoprimitive}
  Let $\rho(\x,t)=\rho(r,t)$ be a radial solution of the continuity
  equation \eqref{eq:density}. Define
  \begin{equation}\label{eq:e}
    e(r,t)=\int_0^rs^{n-1}\rho(s,t)\,{\d}s.
  \end{equation}
  Then, $e$ satisfies the transport equation
  \[e'=\pa_te+u\pa_re=0.\]
\end{lemma}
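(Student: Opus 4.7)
The plan is to prove this directly from the radial form of the continuity equation \eqref{eq:density}, using the boundary condition $u(0,t)=0$ from \eqref{eq:radialzero}.

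First, I would rewrite the continuity equation in radial form. For $\u(\x,t)=\frac{\x}{r}u(r,t)$, the divergence $\div(\rho\u)$ becomes $\frac{1}{r^{n-1}}\pa_r\bigl(r^{n-1}\rho u\bigr)$, so \eqref{eq:density} is equivalent to
\[
r^{n-1}\pa_t\rho + \pa_r\bigl(r^{n-1}\rho u\bigr)=0.
\]
This is the only form of the PDE I will need.

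Next, I would differentiate the definition \eqref{eq:e} in time and space. For the time derivative, integrating the identity above from $0$ to $r$ gives
\[
\pa_t e(r,t) = \int_0^r s^{n-1}\pa_t\rho(s,t)\,\d s = -\int_0^r \pa_s\bigl(s^{n-1}\rho(s,t)u(s,t)\bigr)\,\d s = -r^{n-1}\rho(r,t)u(r,t),
\]
where the boundary term at $s=0$ vanishes because $u(0,t)=0$ (this is where the boundary condition \eqref{eq:radialzero} is used). For the radial derivative, the fundamental theorem of calculus gives $\pa_r e(r,t) = r^{n-1}\rho(r,t)$.

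Combining these two identities yields
\[
\pa_t e + u\,\pa_r e = -r^{n-1}\rho u + u\cdot r^{n-1}\rho = 0,
\]
which is the claim. There is no real obstacle here; the only subtlety is making sure the boundary term at the origin vanishes, which relies on $u(0,t)=0$ (and on $\rho$ being bounded near $r=0$, which follows from the smoothness assumptions). The lemma is essentially the statement that the radial primitive of the mass density $r^{n-1}\rho$ (i.e.\ the mass enclosed in a ball of radius $r$, up to a dimensional constant) is transported along the radial characteristics $\dot r = u(r,t)$, which is the natural Lagrangian conservation law for mass in radial coordinates.
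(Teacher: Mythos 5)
Your proof is correct and rests on the same key identity as the paper's, namely the conservative radial form $\pa_t(r^{n-1}\rho)+\pa_r(r^{n-1}\rho u)=0$ of the continuity equation together with vanishing at the origin; the only cosmetic difference is that you integrate this identity from $0$ to $r$ to compute $\pa_t e$ directly, whereas the paper differentiates the quantity $e'$ in $r$, finds it is identically zero, and then evaluates $e'$ at $r=0$. Both arguments are valid and of essentially the same length.
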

\begin{proof}
  Under radial symmetry \eqref{eq:radial}, the continuity equation
  \eqref{eq:density} can be written as
  \begin{equation}\label{eq:rho}
    \pa_t\rho+\pa_r(\rho u)=-\frac{(n-1)\rho u}{r}.
  \end{equation}
  Then, we can compute
  \begin{align*}
    \pa_r(e')=&\,\pa_t\pa_re+\pa_r(u\pa_re)=
    \pa_t\big(r^{n-1}\rho\big)+\pa_r\big(r^{n-1}\rho u\big)\\
    =&\,r^{n-1}\left(\pa_t\rho+\pa_r(\rho u)+\frac{(n-1)\rho u}{r}\right)=0.
  \end{align*}
  This implies $e'(\cdot,t)$ is a constant. By definition
  $e'(0,t)=0$. Therefore,  we conclude that $e'=0$.
\end{proof}

From \eqref{eq:rhonurelation}, we get $r^n(1-\nu)^n=ne$.
Applying Lemma~\ref{lem:rhoprimitive}, we obtain
\[\big(r(1-\nu)\big)'=0.\]
Note that $r=r(t; r_0)$ is the characteristic path initiated at $r_0$, satisfying
\[r' = u(r,t),\quad r(0; r_0)=r_0.\]
This implies the dynamics of $\nu$
\begin{equation}\label{eq:nu}
  \nu' = \frac{r'}{r}(1-\nu) = \frac{u}{r}(1-\nu) = q(1-\nu).
\end{equation}

The dynamics of $(q,\nu)$ form a closed ODE system along
characteristic paths
\begin{equation}\label{eq:qnu}
  \begin{cases}
    q' = -q^2-\kappa \nu,\\
    \nu' = q(1-\nu).
  \end{cases}
\end{equation}

\begin{proposition}\label{prop:CTqnu}
  Consider the ODE system \eqref{eq:qnu} with initial condition $(q_0,
  \nu_0)$. Then, the solution remains bounded in all time if and only if
  \begin{equation}\label{eq:CTqnu}
    |q_0|<\sqrt{\kappa(1-2\nu_0)}.
  \end{equation}
  Moreover, if $\eqref{eq:CTqnu}$ is violated, there exists a finite
  time $T_c$, such that
  \[\lim_{t\to T_c^-}q(t) = -\infty,\quad
    \lim_{t\to T_c^-}\nu(t)=\begin{cases}
      \infty&\nu_0>1,\\ 1&\nu_0=1,\\ -\infty&\nu_0<1.
    \end{cases}
    \]
\end{proposition}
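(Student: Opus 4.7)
My plan is to exhibit a conserved quantity for the planar system \eqref{eq:qnu} and then read off the dichotomy from its phase portrait. Since $\nu' = q(1-\nu)$ has $(1-\nu)$ as a multiplicative factor, I would look for a first integral of the form $(q^2 + h(\nu))/(1-\nu)^2$; a direct differentiation along solutions reveals that
\[
H(q,\nu) := \frac{q^2 - \kappa(1 - 2\nu)}{(1-\nu)^2}
\]
is conserved. Setting $H_0 := H(q_0,\nu_0)$, the subcritical bound \eqref{eq:CTqnu} is exactly $H_0 < 0$ and its violation is $H_0 \geq 0$.

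In the subcritical case $H_0 < 0$, I rewrite the level set as $q^2 = H_0 \nu^2 - 2(H_0+\kappa)\nu + (H_0+\kappa)$; the right-hand side is a downward-opening parabola in $\nu$, so the level set is compact in $\R^2$. The only equilibrium of \eqref{eq:qnu} is $(0,0)$, where the linearization has eigenvalues $\pm i\sqrt{\kappa}$, a center; since $H(0,0) = -\kappa$ and a short check shows $H_0 \in (-\kappa, 0)$ under the subcritical bound, the level set is a simple closed orbit around $(0,0)$ lying in $\{\nu < 1/2\}$. Hence $(q(t),\nu(t))$ is periodic and uniformly bounded, and $(1-\nu)$ stays away from $0$ so $H$ is well defined throughout.

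In the supercritical case $H_0 \geq 0$ the level set is an unbounded hyperbola ($H_0>0$) or parabola ($H_0=0$), and the trajectory is confined to a single branch. From $(1-\nu)' = -q(1-\nu)$ the sign of $1-\nu$ is preserved along the flow, yielding three subcases. If $\nu_0 = 1$ then $\nu \equiv 1$ and $q$ satisfies the Riccati equation $q' = -q^2 - \kappa$, which blows down to $-\infty$ in finite time. If $\nu_0 \neq 1$, substituting the conservation law into the $q$-equation gives
\[
q' = -(1-\nu)\bigl[H_0(1-\nu) + \kappa\bigr],
\]
which together with $\nu' = q(1-\nu)$ forces $q$ to become and remain negative and drives the trajectory along its branch, with $\nu \to +\infty$ when $\nu_0 > 1$ and $\nu \to -\infty$ when $\nu_0 < 1$, while $q \to -\infty$ in both cases. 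Finite-time blow-up then follows from a Riccati comparison $q' \leq -\tfrac{1}{2} q^2$, valid once $q^2$ dominates $2|\kappa\nu|$, which the identity $q^2 = H_0(1-\nu)^2 + \kappa(1-2\nu)$ guarantees for $|\nu|$ large.

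The main obstacle is locating the conserved quantity $H$; once it is in hand, the remainder is a standard phase-plane analysis. A secondary subtlety in the supercritical case is verifying that the trajectory truly escapes to infinity along its chosen branch rather than stalling at a turning point of the hyperbola, which is handled by combining the preserved sign of $1-\nu$ with the fact that $q$ becomes and stays strictly negative along the flow.
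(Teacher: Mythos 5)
Your proof is correct, and your first integral is exactly the paper's invariant in disguise: with the paper's substitution $w=q/(1-\nu)$, $v=1/(1-\nu)$ one checks that
\[
H(q,\nu)=\frac{q^2-\kappa(1-2\nu)}{(1-\nu)^2}=w^2+\kappa(1-v)^2-\kappa ,
\]
so your level sets are the paper's circles $w^2+\kappa(1-v)^2=\mathrm{const}$ pulled back to the $(q,\nu)$ plane. The difference is in how the invariant is exploited. The paper changes variables so that $(w,v)$ obeys the \emph{linear} system $w'=\kappa(1-v)$, $v'=w$; boundedness, the threshold $H_0<0$, and finite-time blow-up (the first root of the explicit harmonic oscillator $v(t)$) all drop out with no further estimates, and the limits of $q$ and $\nu$ are read off from $\nu=1-1/v$, $q=w/v$ as $v\to0^+$. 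You instead stay in the original coordinates and run a phase-plane argument: compact level sets for $H_0<0$ give global boundedness, while for $H_0\ge0$ you need the extra steps of showing $1-\nu$ stays uniformly away from $0$ on the trajectory (via $q^2=H_0(1-\nu)^2+\kappa(1-2\nu)\ge0$), that $q'=-(1-\nu)[H_0(1-\nu)+\kappa]$ is bounded away from $0$ until $q<0$, and then a Riccati comparison $q'\le-\tfrac12 q^2$ to convert divergence into \emph{finite-time} blow-up. Your route is self-contained and avoids the change of variables, at the cost of these monotonicity and comparison arguments that the linearization renders unnecessary; the paper's route additionally yields the blow-up time explicitly. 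Both treat the degenerate line $\nu_0\ge1$ (where $H$ is undefined or the comparison $q'\le-q^2-\kappa$ applies) by the same direct Riccati argument.
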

\begin{proof}
  Let us first consider the case when $\nu_0\geq1$. We claim that the
  solution must blows up in finite time. Suppose $(q,\nu)$ are bounded
  in any finite time. Then, we have
  \[\nu(t) = 1 + (\nu_0-1)\exp\left[\int_0^t
      q(s)\,ds\right]\geq1,\quad \forall~t\geq0.\]
  Then, we get
  \[q'\leq -q^2-\kappa,\]
  which must blow up in finite time, namely there exists a $T_c$ such
  that
  \[\lim_{t\to T_c^-}q(t) = -\infty.\]
  This leads to a contradiction.
  Furthermore, if $\nu_0>1$, we have
  \[\lim_{t\to T_c^-}\nu(t)=\infty.\]
  If $\nu_0=1$, then $\nu(t)\equiv1$. This corresponds to the case
  when $\rho(t)\equiv0$.
  
  Next, we consider the case $\nu_0<1$. Define
  \begin{equation}\label{eq:wv}
    w = \frac{q}{1-\nu},\quad v=\frac{1}{1-\nu}.
  \end{equation}
  The dynamics of $(w,v)$ forms a linear system
  \begin{align*}
    w'=&\, \frac{q'(1-\nu)+q\nu'}{(1-\nu)^2}=\frac{(-q^2-\kappa\nu)(1-\nu)+q^2(1-\nu)}{(1-\nu)^2}=\frac{-\kappa\nu}{1-\nu}=\kappa(1-v),\\
    v'=&\,\frac{\nu'}{(1-\nu)^2}=\frac{q}{1-\nu}=w.
  \end{align*}
  The trajectory is an ellipse in the $(w,v)$ phase plane. Indeed, we
  have
  \[\big(w^2+\kappa(1-v)^2\big)'=2w\cdot k(1-v)+2\kappa (1-v)\cdot
    (-w) = 0.\]
  The only possible blowup is when $v\to0$.
  Clearly, $v$ remains away from zero if and only if the initial
  condition satisfies
  \[w_0^2+\kappa(1-v_0)^2<\kappa.\]
  Expressing the condition in $(q_0,\nu_0)$, we end up with
  \eqref{eq:CTqnu}.

  If \eqref{eq:CTqnu} is violated, there exists a time $T_c$ such that
  $v(T_c)=0$. Then, we have
  \begin{align*}
    &\lim_{t\to T_c^-}\nu(t)=1-\lim_{t\to T_c^-}\frac{1}{v(t)}=-\infty,\\
    &\lim_{t\to T_c^-}q(t) = \lim_{t\to T_c^-}\frac{\nu'(t)}{1-\nu(t)} =
    -\lim_{t\to T_c^-}\big(\log (1-\nu(t))\big)'=-\infty.
  \end{align*}
\end{proof}

Next, we discuss the dynamics of $\mu$. From the Monge-Amp\`ere
equation \eqref{eq:MAspectral}, we have
\[\mu = 1-\frac{\rho}{(1-\nu)^{n-1}}.\]
Recall the dynamic of $\rho$ \eqref{eq:rho}
\[\rho' = - \rho\,\pa_ru-\frac{(n-1)\rho u}{r}=-\rho\big(p+(n-1)q\big).\]
This, together with \eqref{eq:nu}, implies
\begin{align*}
  \mu' =&\, -\frac{\rho'(1-\nu)^{n-1}+(n-1)\rho(1-\nu)^{n-2}\nu'}{(1-\nu)^{2n-2}}
  = -\frac{-\rho\big(p+(n-1)q\big)+(n-1)\rho q}{(1-\nu)^{n-1}}\\
=&\, \frac{\rho p}{(1-\nu)^{n-1}}=p(1-\mu).
\end{align*}
Therefore, the dynamics of $(p,\mu)$ also forms a closed ODE system along
characteristic paths
\begin{equation}\label{eq:pmu}
  \begin{cases}
    p' = -p^2-\kappa \mu,\\
    \mu' = p(1-\mu).
  \end{cases}
\end{equation}
Observe that it is the same as the dynamics of $(q,\nu)$ in
\eqref{eq:qnu}. We obtain the same critical threshold condition.
\begin{proposition}\label{prop:CTpmu}
  Consider the ODE system \eqref{eq:pmu} with initial condition $(p_0,
  \mu_0)$. Then, the solution remains bounded in all time if and only if
  \begin{equation}\label{eq:CTpmu}
    |p_0|<\sqrt{\kappa(1-2\mu_0)}.
  \end{equation}
    Moreover, if $\eqref{eq:CTqnu}$ is violated, there exists a finite
  time $T_c$, such that
  \begin{equation}\label{eq:pmublowup}
    \lim_{t\to T_c^-}p(t) = -\infty,\quad
    \lim_{t\to T_c^-}\mu(t)=\begin{cases}
      \infty&\mu_0>1,\\ 1&\mu_0=1,\\ -\infty&\mu_0<1.
    \end{cases}
  \end{equation}
\end{proposition}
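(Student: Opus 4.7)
The plan is to observe that the $(p,\mu)$ ODE system \eqref{eq:pmu} is formally identical to the $(q,\nu)$ system \eqref{eq:qnu}, so the argument of Proposition~\ref{prop:CTqnu} transfers verbatim under the relabeling $(q,\nu)\mapsto (p,\mu)$. The proof therefore reduces to replaying the same three-step dichotomy and reading off the threshold \eqref{eq:CTpmu} and blow-up description \eqref{eq:pmublowup}.

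For completeness, I would organize the steps as before. If $\mu_0\ge 1$, integration of $\mu'=p(1-\mu)$ yields $\mu(t)=1+(\mu_0-1)\exp\!\big(\int_0^t p(s)\,\d s\big)\ge 1$, so the first equation gives the Riccati inequality $p'\le -p^2-\kappa$ which blows up in finite time $T_c$ with $p\to -\infty$; the exponential representation then forces $\mu\to\infty$ when $\mu_0>1$ and $\mu\equiv 1$ when $\mu_0=1$. For the main case $\mu_0<1$, I would pass to the variables $w=p/(1-\mu)$ and $v=1/(1-\mu)$, and verify by direct computation that $(w,v)$ satisfies the linear system $w'=\kappa(1-v)$, $v'=w$. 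This system conserves $w^2+\kappa(1-v)^2$, so trajectories are ellipses centered at $(0,1)$ in the $(w,v)$-plane, and since $v>0$ as long as $\mu<1$, the only possible singularity is $v\to 0^+$. That singularity is avoided iff $w_0^2+\kappa(1-v_0)^2<\kappa$, which in the original variables reads exactly \eqref{eq:CTpmu}. If the inequality fails, at the hitting time $T_c$ when $v\to 0^+$ one obtains $\mu\to -\infty$ and $p=-\bigl(\log(1-\mu)\bigr)'\to -\infty$, matching \eqref{eq:pmublowup}.

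Since the ODE structure is identical to the $(q,\nu)$ case, there is no new analytic obstacle; the only point worth double-checking is that the derivation of \eqref{eq:pmu} via $\mu=1-\rho/(1-\nu)^{n-1}$ together with $\rho'=-\rho(p+(n-1)q)$ genuinely produces a \emph{closed} ODE in $(p,\mu)$, decoupled from $(q,\nu)$. This decoupling has already been established in the computation leading up to the proposition statement, so the remainder is essentially a one-line appeal to Proposition~\ref{prop:CTqnu}.
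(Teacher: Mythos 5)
Your proposal is correct and matches the paper exactly: the paper's own justification of Proposition~\ref{prop:CTpmu} is precisely the observation that system \eqref{eq:pmu} is identical to \eqref{eq:qnu}, so Proposition~\ref{prop:CTqnu} applies verbatim under the relabeling $(q,\nu)\mapsto(p,\mu)$. The detailed steps you replay (the $\mu_0\ge 1$ Riccati argument and the $(w,v)$ elliptic-trajectory argument for $\mu_0<1$) coincide with the paper's proof of Proposition~\ref{prop:CTqnu}.
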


We end up with the following sharp critical threshold result for the
radial EMA system.

\begin{theorem}\label{thm:CT}
 Let $(\rho,\u,\phi)$ be a classical solution of the EMA system
 \eqref{eqs:EMA} with radial symmetry \eqref{eq:radial}.
 \begin{itemize}
 \item If the initial condition satisfies
   \begin{equation}\label{eq:CTsub}
     |u_0'(r)|<\sqrt{\kappa(1-2\phi_0''(r))},
   \end{equation}
   for all $r>0$, then the solution $\rho$ and $\grad\u$ are uniformly
   bounded in all time.
 \item If there exists an $r>0$ such that \eqref{eq:CTsub} is
   violated, then there exists a location $r_c$ and a finite time $T_c$, such that 
   \begin{equation}\label{eq:CTblowup}
     \lim_{t\to T_c^-}u_r(r_c,t)=-\infty,\quad \lim_{t\to
       T_c^-}\rho(r_c,t)=\infty\text{ (or 0)}.
   \end{equation}
   \end{itemize}
\end{theorem}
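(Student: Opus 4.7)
The plan is to read Theorem~\ref{thm:CT} off the two decoupled scalar ODE systems \eqref{eq:pmu} and \eqref{eq:qnu} derived along characteristics, together with Propositions~\ref{prop:CTpmu}--\ref{prop:CTqnu} and Lemma~\ref{lem:gradu}. Fix an arbitrary $r_0>0$ and follow the particle path $r(t;r_0)$. Along this path the initial data for \eqref{eq:pmu} is $(p_0,\mu_0)=(u_0'(r_0),\phi_0''(r_0))$, and the subcritical assumption \eqref{eq:CTsub} is literally the threshold \eqref{eq:CTpmu}. Proposition~\ref{prop:CTpmu} then confines $(p,\mu)$ to a closed bounded orbit in the $(w,v)$-plane introduced in \eqref{eq:wv}, giving a uniform-in-time bound depending only on $(p_0,\mu_0)$.

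To run the same argument for the $(q,\nu)$-system I must verify the analogous threshold for the averaged pair $(q_0,\nu_0)=(u_0(r)/r,\phi_0'(r)/r)$. Using the boundary conditions \eqref{eq:radialzero}, $u_0(r)=\int_0^r u_0'(s)\,{\d}s$ and $\phi_0'(r)=\int_0^r \phi_0''(s)\,{\d}s$, so both $q_0$ and $\nu_0$ are \emph{averages} of $p_0$ and $\mu_0$. Jensen's inequality applied to the convex map $(a,b)\mapsto a^2+2\kappa b$ then gives
\[
q_0(r)^2+2\kappa\nu_0(r)\;\le\;\frac{1}{r}\int_0^r\big(u_0'(s)^2+2\kappa\phi_0''(s)\big)\,{\d}s\;<\;\kappa,
\]
which is exactly \eqref{eq:CTqnu}, so Proposition~\ref{prop:CTqnu} delivers a uniform-in-time bound on $(q,\nu)$. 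Lemma~\ref{lem:gradu} then upgrades the $L^\infty$ control of $p$ to the full velocity gradient $\grad\u$, while the Monge-Amp\`ere identity \eqref{eq:MAspectral}, $\rho=(1-\mu)(1-\nu)^{n-1}$, converts the bounds on $(\mu,\nu)$ into a uniform bound on $\rho$.

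For the supercritical half, pick an $r_0$ at which \eqref{eq:CTsub} fails, so that $(p_0,\mu_0)$ lies outside the subcritical ellipse for \eqref{eq:pmu}. Proposition~\ref{prop:CTpmu} then produces a finite time $T_c$ at which $p\to-\infty$ along the characteristic $r(t;r_0)$, and setting $r_c:=r(T_c;r_0)$ yields the first half of \eqref{eq:CTblowup}. The trichotomy \eqref{eq:pmublowup} combined with $\rho=(1-\mu)(1-\nu)^{n-1}$ then produces the stated $\rho(r_c,t)\to+\infty$ when $\mu_0<1$ and $\rho(r_c,t)\to 0$ when $\mu_0>1$ (the degenerate $\mu_0=1$ case corresponds to vacuum $\rho\equiv 0$ already at $t=0$).

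The one place where the argument needs an idea beyond quoting the two propositions is this Jensen-type passage from the hypothesis on the derivative pair $(u_0',\phi_0'')$ to the required threshold on the averaged pair $(u_0/r,\phi_0'/r)$; the asymmetry between the two spectral branches of $\grad\u$ and $D^2\phi$ is what makes it non-automatic. A minor technical point is uniformity in $r_0$: the pointwise ellipse bound from Proposition~\ref{prop:CTpmu} depends continuously on the ellipse parameter $p_0^2+2\kappa\mu_0$, so the global $L^\infty$ bound in $r$ uses that $\U_0$ decays at infinity via the Sobolev hypothesis, making the subcritical gap $\kappa-(u_0'(r))^2-2\kappa\phi_0''(r)$ approach $\kappa$ as $r\to\infty$.
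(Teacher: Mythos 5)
Your overall architecture coincides with the paper's: apply Proposition~\ref{prop:CTpmu} to the closed $(p,\mu)$ system along each characteristic, upgrade the bound on $p$ to $\grad\u$ via Lemma~\ref{lem:gradu}, and read off blowup of $p$ in the supercritical case. The one genuinely different step is how you control $\nu$ (hence $\rho$). You verify the second-branch threshold \eqref{eq:CTqnu} for the \emph{initial} data by Jensen's inequality applied to the convex map $(a,b)\mapsto a^2+2\kappa b$ and the averages $q_0=\frac1r\int_0^r u_0'$, $\nu_0=\frac1r\int_0^r\phi_0''$, and then invoke Proposition~\ref{prop:CTqnu}. This is correct (strictness of the averaged inequality follows from continuity of the data), and it is a nice structural observation: the threshold on the radial eigenvalue pair automatically implies the threshold on the angular pair. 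The paper takes a shorter route that bypasses Proposition~\ref{prop:CTqnu} entirely in the subcritical direction: it bounds the \emph{solution} at each time, $\|\nu(\cdot,t)\|_{L^\infty}\le\|\mu(\cdot,t)\|_{L^\infty}$, directly from $\nu(r,t)=\frac1r\int_0^r\mu(s,t)\,{\d}s$ and the boundary condition $\pa_r\phi(0,t)=0$, exactly as in Lemma~\ref{lem:gradu}. Both arguments are valid; yours isolates a property of the initial data, the paper's propagates a pointwise comparison in time. Your closing remark on uniformity in $r_0$ (the ellipse bound degenerates as the gap $\kappa-(u_0')^2-2\kappa\phi_0''$ closes, so one needs a uniform gap coming from decay of $\U_0$) is a legitimate point that the paper leaves implicit.

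There is one concrete error in your supercritical discussion: you assign $\rho(r_c,t)\to0$ to the case $\mu_0>1$ and dismiss $\mu_0=1$ as trivial. This is backwards. Along a characteristic, $(\log\rho)'=-\big(p+(n-1)q\big)$, so $\rho$ vanishes at some time if and only if it vanishes initially; moreover, with $p\to-\infty$ at rate $\sim -(T_c-t)^{-1}$ the integral $-\int p\,{\d}t$ diverges to $+\infty$, so on non-vacuum characteristics the generic outcome is $\rho\to+\infty$, not $0$. The case $\rho\to0$ in \eqref{eq:CTblowup} is precisely the degenerate case $\mu_0=1$, where $\mu\equiv1$ and $\rho\equiv0$ along that path (equivalently $\rho_0(r_0)=0$), which is what the paper states. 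When $\mu_0>1$ one has $\mu\to+\infty$, and nonnegativity of $\rho_0=(1-\mu_0)(1-\nu_0)^{n-1}$ forces $\nu_0\ge1$, so the $(q,\nu)$ branch is also supercritical; the product $(1-\mu)(1-\nu)^{n-1}$ then does not tend to $0$. The fix is one line, but as written your classification of the two limits in \eqref{eq:CTblowup} is not justified and contradicts the Monge--Amp\`ere identity.
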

\begin{proof}
  For subcritical initial data satisfying \eqref{eq:CTsub},
  we can apply Proposition \ref{prop:CTpmu} along all characteristic
  paths and obtain boundedness of $\|p(\cdot,t)\|_{L^\infty}$ and
  $\|\mu(\cdot,t)\|_{L^\infty}$ in all time.
  Then, uniform boundedness on $\|\grad\u(\cdot,t)\|_{L^\infty}$
  follows directly from Lemma \ref{lem:gradu}.

  To obtain boundedness of $\rho$, we recall that
  $\rho=(1-\mu)(1-\nu)^{n-1}$. Therefore, it suffies to show
  boundedness of $\nu$. Through a similar argument as in Lemma
  \ref{lem:gradu}, we have
  \[|\pa_r\phi(r,t)|=\left|\pa_r\phi(0,t)+\int_0^r\pa_r^2\phi(s,t)\,{\d}s\right|\leq
    r\|\mu(\cdot,t)\|_{L^\infty}.\]
  Hence, $\|\nu(\cdot,t)\|_{L^\infty}\leq\|\mu(\cdot,t)\|_{L^\infty}$.
  Consequently,
  $\|\rho(\cdot,t)\|_{L^\infty}\leq\|\mu(\cdot,t)\|_{L^\infty}^n$ is
  bounded.

  For supercritical initial data, suppose \eqref{eq:CTsub} is violated
  at $r=r_0>0$. Then, applying Proposition \ref{prop:CTpmu}, the
  solution of the ODE system \eqref{eq:pmu} with initial condition
  $p(0)=u'_0(r_0)$ and $\mu(0)=\phi''_0(r_0)$ becomes unbounded in
  a finite time $T_c$, at the location $r_c=r(T_c; r_0)$.
  Moreover, if solution is smooth in $[0,T_c)$, \eqref{eq:pmublowup}
  directly implies \eqref{eq:CTblowup}. In particular, the case 
  $\rho(r_c,T_c)=0$ only happens if $\mu_0=1$, or equivalently
  $\rho_0(r_0)=0$.
\end{proof}

\begin{remark}\label{rem:rhom}
  According to \eqref{eq:CTqnu} and \eqref{eq:CTpmu}, global solutions of their respective ODEs   require that $\nu_0>\nicefrac{1}{2}$ and, respectively, $\mu>\nicefrac{1}{2}$, hence a  global smooth solution of    \eqref{eq:MAspectral} requires
  \[\rho_0=det(\Id-D^2\phi)= (1-\mu_0)(1-\nu_0)^{n-1}>
    \frac{p_0^2+\kappa}{2\kappa}\left(\frac{q_0^2+\kappa}{2\kappa}\right)^{n-1}
  \ge \frac{1}{2^n}.\]
  This recovers the necessary  lower-bound, $\rho_0>\nicefrac{1}{2}$, for global regularity in the case $n=1$. Thus, $\rho_0(r_0) <2^{-n}$ will necessarily leads to formation of shock discontinuities and in particular, a vacuous state of $\rho_0$  leads to formation of (non-physical) shocks.   
  On the other hand, if
 $\rho_0$ is not far away from the equilibrium state $\rho_0=1$, such
 that $\mu_0>\nicefrac{1}{2}$ and $\nu_0>\nicefrac{1}{2}$,
 we can always find $p_0$ and $q_0$ small
 enough, such that \eqref{eq:CTqnu} and \eqref{eq:CTpmu} hold.
\end{remark}

%%%%%%%%%%%%%%%%%%%%%%%%%%%%
\subsection{A comparison with  Euler-Poisson equations}\label{sec:compEP}
%%%%%%%%%%%%%%%%%%%%%%%%%%%%

In this section, we compare our critical threshold result for the
EMA system \eqref{eqs:EMA} with the Euler-Poisson equations
\eqref{eqs:EP}, under radial symmetry.

A sharp critical threshold was obtained in \cite{tan2021eulerian}
for the radial Euler-Poisson equations, following a similar procedure.
We summarize the result here for the sake of self-consistency,
using the same notations $(p,q,\mu,\nu)$ as defined in \eqref{def:pq}
and \eqref{def:munu}.

The Poisson equation \eqref{eq:Poisson} can be expressed as
\begin{equation}\label{eq:EPrho}
  -(\mu+(n-1)\nu)=\widetilde\rho-1,
\end{equation}
which implies
\[\pa_r\big(r^n(1-n\nu)\big) = n r^{n-1}\widetilde\rho.\]
Applying Lemma \ref{lem:rhoprimitive} with  $e=r^n(\frac1n-\nu)$, we obtain
\[\big(r^n(1-n\nu)\big)'=ne'=0.\]
It yields
\[ \nu' = q(1-n\nu). \]
Hence, the dynamics of $(q,\nu)$ reads
\begin{equation}\label{eq:EPqnu}
  \begin{cases}
    q' = -q^2-\kappa \nu,\\
    \nu' = q(1-n\nu).
  \end{cases}
\end{equation}
In contrast to \eqref{eq:qnu}, the dynamics depends on the dimension
$n$. The global behaviors are surprisingly different.
\begin{proposition}[{\cite[Theorem 3.15]{tan2021eulerian}}]\label{prop:EPqnu}
  Let $n\geq2$. consider the ODE system \eqref{eq:EPqnu} with bounded
  initial data $(q_0, \nu_0<\frac1n)$. Then, the solution $(q,\nu)$ remains
  bounded in all time.
\end{proposition}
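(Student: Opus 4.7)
The plan is to reduce the nonlinear $2\times 2$ system \eqref{eq:EPqnu} to a single second-order Hamiltonian ODE whose conserved energy, together with a coercive potential, traps both $q$ and $\nu$ in a bounded set for all time. First, I would introduce the substitution $v := (1-n\nu)^{-1/n}$, which is well-defined and positive at $t=0$ by the hypothesis $\nu_0 < 1/n$. A direct calculation using the $\nu$-equation in \eqref{eq:EPqnu} gives $v' = qv$, so that $q = v'/v$; differentiating once more and invoking the $q$-equation converts \eqref{eq:EPqnu} into the single scalar equation
\[
v'' = -\frac{\kappa}{n}\bigl(v - v^{1-n}\bigr),
\]
which is Hamiltonian with conserved energy $E = \tfrac12 (v')^2 + \Phi(v)$, where
\[
\Phi(v) = \begin{cases}
\dfrac{\kappa}{2n}\, v^2 + \dfrac{\kappa}{n(n-2)}\, v^{2-n}, & n \geq 3,\\[1mm]
\dfrac{\kappa}{4}\, v^2 - \dfrac{\kappa}{2}\, \log v, & n = 2.
\end{cases}
\]

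Next, I would observe that in both regimes $\Phi$ is coercive on $(0,\infty)$: as $v\to 0^+$ the second term diverges (a negative power for $n\geq 3$, a logarithm for $n=2$), while as $v\to\infty$ the quadratic term dominates. Since $E$ is finite at $t=0$ (because $v_0 > 0$ is finite and $v'_0 = q_0 v_0$ with $q_0$ bounded), conservation of $E$ forces $v(t)$ to remain in a compact interval $[v_{\min}, v_{\max}] \subset (0,\infty)$, and $v'(t)$ is uniformly bounded by $\sqrt{2(E - \inf \Phi)}$. Translating back via $\nu = (1 - v^{-n})/n$ and $q = v'/v$, the first is a smooth function of a bounded $v$, while the second is a ratio of a bounded numerator and a denominator bounded below by $v_{\min} > 0$; this yields the claimed global boundedness of $(q,\nu)$.

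The main obstacle I foresee is essentially cosmetic: the separate bookkeeping between $n=2$ (logarithmic potential) and $n\geq 3$ (power potential), together with confirming that the formal substitution $v = (1-n\nu)^{-1/n}$ persists for all $t$, which reduces to showing that $1-n\nu > 0$ is preserved --- and this is automatic once coercivity of $\Phi$ keeps $v$ finite and positive. No delicate extra inequality is needed, which is precisely the dimensional surprise of Proposition \ref{prop:EPqnu}: the linear coupling through the Poisson equation for $n \geq 2$ produces automatic confinement of $(q,\nu)$, in stark contrast to the conditional threshold \eqref{eq:CTqnu} arising from the nonlinear Monge-Amp\`ere coupling.
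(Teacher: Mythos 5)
Your argument is correct; note that the paper does not actually prove Proposition \ref{prop:EPqnu} but quotes it from \cite{tan2021eulerian}, so your write-up serves as a self-contained verification rather than a variant of an in-text proof. The computation checks out: with $v=(1-n\nu)^{-1/n}$ one gets $v'=qv$, hence $v''=q'v+qv'=(-q^2-\kappa\nu)v+q^2v=-\kappa\nu v=-\tfrac{\kappa}{n}\bigl(v-v^{1-n}\bigr)$; your $\Phi$ is a correct antiderivative of $\tfrac{\kappa}{n}(v-v^{1-n})$ in both regimes; and coercivity of $\Phi$ at both ends of $(0,\infty)$ plus conservation of $E$ traps $v$ in a compact subinterval, which bounds $q=v'/v$ and $\nu=(1-v^{-n})/n$ and shows that $1-n\nu>0$ propagates, closing the continuation argument. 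It is instructive to set this beside the paper's proof of the Monge--Amp\`ere analogue, Proposition \ref{prop:CTqnu}: there the substitution \eqref{eq:wv} linearizes \eqref{eq:qnu} exactly, the conserved quantity $w^2+\kappa(1-v)^2$ is an ellipse that may reach $v=0$, and this is precisely what produces a nontrivial threshold. Your substitution is the natural adaptation to the factor $(1-n\nu)$ in \eqref{eq:EPqnu}; for $n\ge 2$ the resulting planar system is no longer linear but remains Hamiltonian, and the additional term $v^{2-n}$ (or $-\log v$ when $n=2$) in the potential diverges as $v\to0^+$, replacing the conditional threshold by unconditional confinement. Since $v$ is, up to the constant $(ne_0)^{1/n}$, just the particle trajectory $r(t;r_0)$ (because $r^n(1-n\nu)$ is transported), your second-order equation is the radial momentum equation in disguise, which makes the mechanism behind the ``dimensional surprise'' noted after Proposition \ref{prop:EPqnu} transparent.
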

Note that from the definition \eqref{eq:e}, $e_0(r)\geq0$, where
the inequality holds in the trivial case where $\widetilde\rho_0(s)=0$ for
$s\in[0,r]$. Therefore, $\nu_0<\frac1n$ holds for generic initial
data. The indicates different behaviors as the EMA system, where
blowup can happen in the $(q,\nu)$ dynamics, as long as
\eqref{eq:CTqnu} is violated.

The dynamics of $(p,\mu)$ however, is less understood for the
Euler-Poisson equations. Indeed, we can calculate from
\eqref{eq:EPrho} and \eqref{eq:EPqnu}
\begin{align*}
  \mu' =&\, -\widetilde\rho\,'-(n-1)\nu'=\widetilde\rho\left(p+(n-1)q\right)-(n-1)q(1-n\nu)\\
  =&\, p(1-\mu) + (n-1)\big[-p\nu-q(\mu-\nu)\big].
\end{align*}
This does not yield a closed ODE system on $(p,\mu)$, except when
$n=1$, where the Poisson equation \eqref{eq:Poisson} coincides with the
Monge-Amp\`ere equation \eqref{eq:MA}. Whether there is an explicit
threshold condition that leads to a global bound for the
$(p,q,\mu,\nu)$ dynamics for the radial Euler-Poisson equations is open.

The explicit result in Theorem \ref{thm:CT} indicates that the EMA
system has some special structures compared with the Euler-Poisson
equations, despite of being fully nonlinear. It is related to the
geometric structure of the Monge-Amp\`ere equation, which will be
discussed in Section \ref{sec:geometric}.

\section{Global wellposedness}\label{sec:GWP}
The local and global wellposedness theory for the EMA system
\eqref{eqs:EMA} in Sobolev space $H^s$ has been
established by Loeper in \cite{loeper2005quasi}, using energy
estimates. The theory requires a smallness assumption on the potential
$\phi$ to handle the nonlinearity from the Monge-Amp\`ere
equation.

We now establish a global wellposedness theory for the EMA system with
radial symmetry. We make use of the critical threshold condition, and
do not require any smallness assumptions.

Let $\U$ be a vector-valued radial function defined as
\begin{equation}\label{eq:U}
    \U(\x,t)=\begin{bmatrix}p(|\x|,t)\\ q(|\x|,t)\\ \mu(|\x|,t)\\
      \nu(|\x|,t)\end{bmatrix}
  =\begin{bmatrix}\pa_ru(|x|,t)\\ \frac{u(|\x|,t)}{|x|}\\ \pa_r^2\phi(|\x|,t)\\
    \frac{\pa_r\phi(|\x|,t)}{|x|}\end{bmatrix}.
\end{equation}
From the dynamics \eqref{eq:qnu} and \eqref{eq:pmu}, we know $\U$
  satisfies
  \[
    \pa_t\U+(\u\cdot\grad)\U=\F(\U),\quad
    \F(\U)=\begin{bmatrix}
      -U_1^2-\kappa U_3\\ -U_2^2-\kappa U_4\\
      U_1(1-U_3)\\ U_2(1-U_4).
    \end{bmatrix}.
  \]
  Equivalently, we can write
  \begin{equation}\label{eq:bigU}
    \pa_tU_i+\div(U_i\u)=\widetilde{F}_i(\U),\quad
    \forall~i=1,2,3,4,
  \end{equation}
  with a nonlinear force $\widetilde{\F}$ which depends quadratically on
  $\U$
  \begin{equation}\label{eq:Ftilde}
    \widetilde{\F}(\U):=\F(\U)+(\div\u)\,\U=\begin{bmatrix}
      -U_1^2-\kappa U_3+U_1\big(U_1+(n-1)U_2\big)\\
      -U_2^2-\kappa U_4+U_2\big(U_1+(n-1)U_2\big)\\
      U_1(1-U_3)+U_3\big(U_1+(n-1)U_2\big)\\
      U_2(1-U_4)+U_4\big(U_1+(n-1)U_2\big)
    \end{bmatrix}.
  \end{equation}
  Here, we have used
  \begin{equation}\label{eq:divu}
    \div\u=\sum_{i=1}^n\lambda_i(\grad\u)=U_1+(n-1)U_2.
  \end{equation}

Let us first state a local wellposedness theory, as well as regularity criteria.
\begin{theorem}\label{thm:local}
  Consider the EMA system \eqref{eqs:EMA} with radial
  symmetry \eqref{eq:radial}. $\U$ is defined in \eqref{eq:U}.
  Suppose the initial condition $\U_0\in H^s(\R^n)$, for $s>\frac{n}{2}$.
  Then, there exists a time $T>0$ such that the solution
  \begin{equation}\label{eq:regularity}
    \U\in C([0,T],  H^{s}(\R^n))^4.
  \end{equation}
  Moreover, the life span $T$ can be extended as long as
  \begin{equation}\label{eq:BKM}
    \int_0^\top\|\U(\cdot,t)\|_{L^\infty}\,{\d}t<+\infty.
  \end{equation}
\end{theorem}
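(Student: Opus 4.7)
The strategy is the standard one for symmetric quasilinear systems: construct approximate solutions by Picard iteration on the transport form of (\ref{eq:bigU}), then close a priori $H^s$ estimates. The key structural observation is that the forcing $\widetilde{\F}(\U)$ in (\ref{eq:Ftilde}) is a \emph{quadratic polynomial} in $\U$, while the transport velocity $\u$ is determined by $\U$ through the radial ansatz, with $\|\grad\u\|_{L^\infty}$ controlled by $\|\U\|_{L^\infty}$ via Lemma~\ref{lem:gradu}. Since $s>\tfrac{n}{2}$, Sobolev embedding yields $H^s(\R^n)\hookrightarrow L^\infty$ and $H^s$ is a Banach algebra, so the Moser-type composition estimate reduces to
$$\|\widetilde{\F}(\U)\|_{H^s}\le C\|\U\|_{L^\infty}\|\U\|_{H^s}.$$

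For the a priori bound I apply $\Lambda^s=(-\Delta)^{s/2}$ to (\ref{eq:bigU}), pair with $\Lambda^s U_i$ in $L^2$, and sum in $i$. After integration by parts the transport contribution is controlled by $\|\div\u\|_{L^\infty}\|\U\|_{H^s}^2$, which by (\ref{eq:divu}) and (\ref{eq:gradup}) is dominated by $C\|\U\|_{L^\infty}\|\U\|_{H^s}^2$. The commutator $[\Lambda^s,\u\cdot\grad]U_i$ is handled by the gradient form of the Kato--Ponce inequality, producing a bound of the form $\big(\|\grad\u\|_{L^\infty}+\|\grad\u\|_{H^{s-1}}\big)\|\U\|_{H^s}$; the latter norm is in turn dominated by $\|\U\|_{H^s}$ through the explicit pointwise representation
$$\grad\u(\x,t)=U_2(|\x|,t)\,\Id+\big(U_1(|\x|,t)-U_2(|\x|,t)\big)\frac{\x\,\x^\top}{|\x|^2}$$
supplied by Lemma~\ref{lem:eigenvalue}. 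Combining the three contributions yields the master inequality
$$\frac{d}{dt}\|\U(\cdot,t)\|_{H^s}^2\le C\big(1+\|\U(\cdot,t)\|_{L^\infty}\big)\|\U(\cdot,t)\|_{H^s}^2.$$

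Local existence on $[0,T]$ with $T=T(\|\U_0\|_{H^s})>0$ then follows by bounding $\|\U\|_{L^\infty}\le C\|\U\|_{H^s}$ via Sobolev embedding and integrating the resulting Riccati-type inequality, while uniqueness and continuous dependence come from applying the same estimate to the $L^2$-difference of two solutions. For the continuation criterion (\ref{eq:BKM}), a direct Gronwall application to the master inequality gives
$$\|\U(\cdot,t)\|_{H^s}^2\le \|\U_0\|_{H^s}^2\exp\Big(C\!\int_0^t\!\big(1+\|\U(\cdot,\tau)\|_{L^\infty}\big)\,\d\tau\Big),$$
so the $H^s$ norm cannot blow up at any $T<\infty$ unless $\int_0^T\|\U(\cdot,t)\|_{L^\infty}\,\d t=+\infty$. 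The main technical delicacy I anticipate lies in the $H^{s-1}$ control of $\grad\u$: although the pointwise representation makes the $L^\infty$ bound immediate, the angular projector $\x\x^\top/|\x|^2$ is merely bounded and not smooth at the origin, so obtaining a genuine $H^{s-1}$ bound requires exploiting the cancellation $U_1-U_2\to 0$ at $r=0$ that is built into radial $H^s$ data consistent with the boundary conditions (\ref{eq:radialzero}), and that is preserved by the radial flow of (\ref{eq:bigU}).
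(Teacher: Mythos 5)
Your overall strategy coincides with the paper's: an $H^s$ energy estimate on the transport form \eqref{eq:bigU}, with the transport term handled by a Kato--Ponce commutator, the forcing handled through the quadratic dependence of $\widetilde{\F}$ on $\U$ in \eqref{eq:Ftilde}, and Gr\"onwall giving both local existence and the continuation criterion \eqref{eq:BKM}. The one place where your argument has a genuine gap is exactly the step you flag as ``the main technical delicacy'': controlling the top-order Sobolev norm of $\grad\u$ by $\|\U\|_{H^s}$. Your proposed route --- bounding $\grad\u=U_2\,\Id+(U_1-U_2)\,\x\x^\top/|\x|^2$ directly and exploiting the cancellation $U_1-U_2\to0$ at the origin --- is left unexecuted, and it is not routine: $\x\x^\top/|\x|^2$ is a bounded but discontinuous (degree-zero homogeneous) factor, so multiplication by it does not preserve $H^\sigma$ for $\sigma\geq n/2$ without quantitative vanishing of $U_1-U_2$ at $r=0$ to sufficiently high order, which you would have to extract from the radial structure and then propagate along the flow. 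Note also that, as written, your commutator bound $\big(\|\grad\u\|_{L^\infty}+\|\grad\u\|_{H^{s-1}}\big)\|\U\|_{H^s}$ does not imply your master inequality: after pairing with $\Lambda^s\U$, the term $\|\grad\u\|_{H^{s-1}}\|\U\|_{H^s}^2$ is cubic in $\|\U\|_{H^s}$ rather than of the form $\|\U\|_{L^\infty}\|\U\|_{H^s}^2$, and that would spoil the Beale--Kato--Majda--type criterion \eqref{eq:BKM}. The form you actually need is $\|\grad\u\|_{L^\infty}\|\Lambda^s\U\|_{L^2}+\|\Lambda^{s+1}\u\|_{L^2}\|\U\|_{L^\infty}$, which reduces the issue to showing $\|\Lambda^{s+1}\u\|_{L^2}\lesssim\|\Lambda^s\U\|_{L^2}$.

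The paper closes this step by a much simpler observation that bypasses the singular angular projector entirely: the radial no-swirl velocity is a potential flow, $\u=\grad U$, so $\grad\u=D^2U$, and by $L^2$-boundedness of the double Riesz transform $\|\Lambda^s(\grad\u)\|_{L^2}\lesssim\|\Lambda^s(\Delta U)\|_{L^2}=\|\Lambda^s(\div\u)\|_{L^2}$, while \eqref{eq:divu} gives $\div\u=U_1+(n-1)U_2$, a linear combination of the components of $\U$. Hence $\|\Lambda^{s+1}\u\|_{L^2}\lesssim\|\Lambda^s\U\|_{L^2}$ with no analysis at the origin. If you replace your treatment of $\grad\u$ by this identity, the rest of your argument goes through and essentially reproduces the paper's proof.
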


\noindent
The local existence result follows from the standard energy method in which one obtain a closure of $H^s$ estimates for $s>n/2$ so that $H^s(\R^n) \subset L^\infty(\R^n)$, as long as the Beale-Kato-Majda like condition 
$\displaystyle \int_0^T \|\nabla\cdot\u(\cdot,t)\|_{L^\infty}{\d}t <\infty$ holds, e.g., \cite{lin2000hydrodynamic}. For completeness, we outline the details below. 
\begin{proof}

  Given any $s\geq0$, denote $\Lambda^s = (-\Delta)^{s/2}$ as the
  fractional Laplacian operator. Define energy $Y_s(t)$ as
  \[Y_s(t)=\frac{1}{2}\|\U\|_{H^s(\R^n)}^2=\frac{1}{2}\|\U\|_{L^2(\R^n)}^2+\frac{1}{2}\|\Lambda^s\U\|_{L^2(\R^n)}^2.\]
  
  The $L^2$ energy can be estimated by
  \begin{align*}
    \frac12\frac{d}{dt}\|\U\|_{L^2}^2=&\,
    -\int_{\R^n}U_i\cdot\pa_{x_j}(U_iu_j)\,{\d}x+\int_{\R^n}U_i\cdot \widetilde{F}_i(\U)\,{\d}x\\
    \leq&\,\int_{\R^n}\pa_{x_j}\left(\frac{U_i^2}{2}\right)\cdot u_j\,{\d}x+\|U_i\|_{L^2}\cdot\|\widetilde{F}_i(\U)\|_{L^2}\\
    \leq&\,-\int_{\R^n}\pa_{x_j}u_j\cdot\frac12U_i^2\,{\d}x +C\|\U\|_{L^2}\cdot(1+\|\U\|_{L^\infty})\|\U\|_{L^2}.\\
    \lesssim&\,
    (1+\|\div\u\|_{L^\infty}+\|\U\|_{L^\infty})\|\U\|_{L^2}^2
    \lesssim  (1+\|\U\|_{L^\infty})\|\U\|_{L^2}^2.
  \end{align*}
  Here, we use Einstein summation convention and drop the summation on $i$
  and $j$ for simplicity. We also use the notation $\lesssim$, where
  $A\lesssim B$ means $A\leq CB$, with a constant $C$ which might
  depend on parameters (like $n$, $s$, etc.).
  In the penultimate line, we make use of the quadratic
  dependence of $\widetilde{\F}$ on $\U$ in \eqref{eq:Ftilde}. Apply H\"older inequality and
  get
  \[\|\widetilde{\F}(\U)\|_{L^2}\lesssim (1+\|\U\|_{L^\infty})\|\U\|_{L^2}.\]
  The last inequality is due to \eqref{eq:divu}.
  
  Next, we estimate the $\dot{H}^s$ energy.
  Apply $\Lambda^s$ to \eqref{eq:bigU}, multiply by $\Lambda^s\U$, and
  integrate in $\R^n$. We obtain
  \[\frac12\frac{d}{dt}\|\Lambda^s\U\|_{L^2}^2=
    -\int_{\R^n}\Lambda^sU_i\cdot\Lambda^s\pa_{x_j}(U_iu_j) \,{\d}x
    +\int_{\R^n}\Lambda^sU_i\cdot \Lambda^s(\widetilde{F}_i(\U))\,{\d}x=I+II.\] 
  To estimate $I$, we use a Kato-Ponce type commutator estimate
  \cite{kato1988commutator} and get
  \begin{align*}
    I=&\, -\int \Lambda^sU_i\cdot u_j\,\Lambda^s\pa_{x_j}U_i\,{\d}x
        -\int \Lambda^sU_i\cdot \big[\Lambda^s\pa_{x_j}, u_j\big]U_i\,{\d}x\\
    \leq&\, \int\pa_{x_j}u_j\cdot\frac12\,(\Lambda^sU_i)^2 \,{\d}x
       +\|\Lambda^sU_i\|_{L^2}\left\|\big[\Lambda^s\pa_{x_j}, u_j\big]U_i\right\|_{L^2}\\
    \leq&\, \frac{1}{2}\|\div\u\|_{L^\infty}\|\Lambda^s\U\|_{L^2}^2
        +C \|\Lambda^sU_i\|_{L^2}\big(\|\grad u_j\|_{L^\infty}\|\Lambda^sU_i\|_{L^2}
        +\|\Lambda^{s+1}u_j\|_{L^2}\|U_i\|_{L^\infty}\big)\\
    \lesssim&\, \|\grad\u\|_{L^\infty}\|\Lambda^s\U\|_{L^2}^2+\|\Lambda^s\U\|_{L^2}\|\U\|_{L^\infty}\|\Lambda^s(\grad\u)\|_{L^2}.
  \end{align*}
  Furthermore, using \eqref{eq:divu}, we have
  \[ \|\Lambda^s(\grad\u)\|_{L^2}\lesssim \|\Lambda^s(\div\u)\|_{L^2}
    \leq\|\Lambda^sU_1\|_{L^2}+(n-1)\|\Lambda^sU_2\|_{L^2}\lesssim
    \|\Lambda^s\U\|_{L^2}.\]
  Applying the estimate above and \eqref{eq:gradup}, we obtain
  \begin{equation}\label{eq:energyI}
    I\lesssim \|\U\|_{L^\infty}\|\Lambda^s\U\|_{L^2}^2.
  \end{equation}
  The $II$ term can be estimated as follows
  \begin{equation}\label{eq:energyII}
    II\leq
    \|\Lambda^s\U\|_{L^2}\|\Lambda^s(\widetilde{\F}(\U))\|_{L^2}\lesssim
    (1+\|\U\|_{L^\infty})\|\Lambda^s\U\|_{L^2}^2.
  \end{equation}
  Here, we have used the quadratic dependence of $\widetilde{\F}$ on $\U$
  in \eqref{eq:Ftilde} again, and apply fractional Leibniz rule to get
  \[\|\Lambda^s(\widetilde{\F}(\U))\|_{L^2}\lesssim
    (1+\|\U\|_{L^\infty})\|\Lambda^s\U\|_{L^2}.\]
  Combining \eqref{eq:energyI} and \eqref{eq:energyII}, we end up with
  \[\frac12\frac{d}{dt}\|\Lambda^s\U\|_{L^2}^2\lesssim
    (1+\|\U\|_{L^\infty})\|\Lambda^s\U\|_{L^2}.\]

  From the $L^2$ and $\dot{H}^s$ energy estimates, we get
  \[\frac{d}{dt}Y_s(t)\lesssim (1+\|\U(\cdot,t)\|_{L^\infty})\,Y_s(t).\]
  Local wellposedness follows from standard Sobolev embedding, for any
  $s>\frac{n}{2}$. Moreover, we apply Gr\"onwall inequality
  \[Y_s(t)\leq Y_s(0)\exp\left[C\int_0^t
      (1+\|\U(\cdot,s)\|_{L^\infty})\,ds\right].\]
  Hence, $Y_s(t)$ is bounded as long as \eqref{eq:BKM} holds.
\end{proof}

Theorem \ref{thm:CT} provides sufficient and necessary conditions to
ensure the regularity criterion \eqref{eq:BKM}. Hence,
our main Theorem \ref{thm:main} is a direct consequence of Theorems
\ref{thm:CT} and \ref{thm:local}.

%%%%%%%%%%%%%%%%%%%%%%%%%
\section{A geometric approach}\label{sec:geometric}
%%%%%%%%%%%%%%%%%%%%%%%%%%%%
In this section, we provide an alternative way to study the global
wellposeness of the EMA system \eqref{eqs:EMA},
taking advantage of the geometric structure of the system.

Let us start with the definition and notation for the pushforward
mapping.
\begin{definition}[Pushforward]
  Let $\Omega\subset\R^n$. A measurable mapping $T: \Omega\to\R^n$ is called a
  pushforward from a measure $\mu$ in $\Omega$ to a measure $\nu$ in
  $T(\Omega)$, if for any measurable test function $f$,
  \[\int_{\Omega}f \circ T~d\mu = \int_{T(\Omega)} f~d\nu.\]
  We use the notation $T_\sharp\,\mu=\nu$. Moreover, if $d\mu = \rho_1(\x)d\x$ and
  $d\nu = \rho_2(\x)d\x$, we denote $T_\sharp\, \rho_1 = \rho_2$.
\end{definition}

The key ingredient is to link the solution of the Monge-Amp\`ere
equation to a pushforward mapping.
\begin{lemma}[Monge-Amp\`ere
equation represented as pushforward]\label{lem:PFMA}
  Let $\psi_t$ is a solution of the Monge-Amp\`ere equation
  \[det\big(D^2\psi_t(\x)\big)=\rho(\x,t).\]
  Then, $\grad\psi_t$ is a pushforward from $\rho(\x,t)d\x$ to the
  Lebesgue measure $d\x$, namely
  \begin{equation}\label{eq:MApf}
    (\grad\psi_t)_\sharp\,\rho(\cdot,t) = 1.
  \end{equation}
\end{lemma}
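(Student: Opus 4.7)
The plan is to verify the pushforward identity by a direct change of variables in the integral defining the pushforward measure. Concretely, fix a continuous test function $f$ with compact support and consider
\[
I := \int_{\Omega} f(\grad\psi_t(\x))\, \rho(\x,t)\,\d\x,
\]
where $\Omega$ is the domain of $\psi_t$. Using the Monge-Amp\`ere equation $\rho(\x,t) = \det(D^2\psi_t(\x))$, I would rewrite
\[
I = \int_{\Omega} f(\grad\psi_t(\x))\, \det(D^2\psi_t(\x))\,\d\x,
\]
and recognize that the Jacobian of the map $\x \mapsto \y := \grad\psi_t(\x)$ is exactly $D^2\psi_t(\x)$. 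Applying the classical change-of-variables formula yields $I = \int_{\grad\psi_t(\Omega)} f(\y)\,\d\y$, which is precisely the pushforward relation \eqref{eq:MApf}.

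To make the change of variables rigorous, I need $\grad\psi_t$ to be a valid (injective, a.e.) transformation of $\Omega$ onto its image. The natural framework is to assume $\psi_t$ is a \emph{convex} solution of the Monge-Amp\`ere equation (the usual Brenier/Aleksandrov setting). Convexity ensures that $D^2\psi_t \geq 0$, so $\det(D^2\psi_t) = |\det(D^2\psi_t)|$ and the Jacobian factor is genuinely the absolute value of the Jacobian determinant; together with monotonicity of $\grad\psi_t$, this gives injectivity a.e., so no multiplicity corrections appear. In the application to the EMA system, one takes $\psi_t(\x) = \tfrac{1}{2}|\x|^2 - \phi(\x,t)$, and the requirement $D^2\psi_t = \Id - D^2\phi \geq 0$ corresponds to the non-negativity of the density coming from \eqref{eq:MA}, which is natural.

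The most delicate point, and the main obstacle, is precisely this regularity/convexity issue: the change-of-variables formula needs $\grad\psi_t$ to be a.e.~differentiable and (at least essentially) one-to-one onto the target set. In the smooth convex setting this is immediate, and in the weaker Brenier setting it follows from Aleksandrov's differentiability theorem for convex functions together with the fact that the subdifferential of a convex function is a.e.~single-valued. Under the smoothness hypotheses in play here (we are proving the lemma in the context of classical solutions), I would simply invoke the smooth change-of-variables formula and note as a remark that the identity extends to Brenier solutions by standard approximation, as in \cite{brenier2004geometric, loeper2005quasi}.
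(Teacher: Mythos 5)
Your proof is correct and follows exactly the route the paper intends: the paper simply remarks that the lemma ``can be done by a simple change of variable formula,'' which is precisely the test-function computation you carry out, with the Jacobian of $\x\mapsto\grad\psi_t(\x)$ identified as $D^2\psi_t$. Your added discussion of convexity and a.e.~injectivity is a reasonable elaboration of the same argument rather than a different approach.
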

The proof of the lemma can be done by a simple change of variable
formula. 
We notice that the representation \eqref{eq:MApf} makes sense as
long as $\rho(\cdot,t)$ is a measure.
If we further assume that the density $\rho(\cdot,t)$ is bounded and away from
vacuum
\begin{equation}\label{eq:rhononvac}
  0<\rho_{\min}(t)\leq \rho(\cdot,t)\leq\rho_{\max}(t)<+\infty,
\end{equation}
then $\grad\psi_t$ is a diffeomorphism.

Once we find $\psi_t$ that solves \eqref{eq:MApf}, the solution of the 
Monge-Amp\`ere equation \eqref{eq:MA} can be expressed as
\begin{equation}\label{eq:phipsi}
  \phi(\x,t) = \frac{|\x|^2}{2}-\psi_t(\x).
\end{equation} 

In order the construct the pushforward mapping $\grad\psi_t$ that
satisfies \eqref{eq:MApf}, we make use of the characteristic flow
$\X_t(\x)$, defined as
\begin{equation}\label{eq:characterstic}
  \pa_t\X_t(\x)=\u(\X_t(\x),t),\quad \X_0(\x)=\x,
\end{equation}
where $\u$ is the velocity field. $\X_t$ can be viewed as a
pushforward mapping.

\begin{lemma}[Characteristic flow represented as pushforward]\label{lem:PF}
  Suppose $\rho$ satisfies \eqref{eq:density}, with a Lipschitz flow
  $\u$. Then, $\X_t$ is a diffeomorphism. It satisfies
  \begin{equation}\label{eq:rhoPF}
    (\X_t)_\sharp\,\rho_0=\rho(\cdot,t).
  \end{equation}
\end{lemma}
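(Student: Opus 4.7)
The plan is to combine standard ODE theory for the characteristic flow with the Lagrangian form of the continuity equation \eqref{eq:density}. The argument splits into two essentially independent steps: first, establishing that $\X_t$ is a diffeomorphism; second, verifying the pushforward identity by reducing it to a pointwise Jacobian relation and then showing that relation holds via \eqref{eq:density}.

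For the diffeomorphism property, since $\u$ is Lipschitz in $\x$, the Picard--Lindel\"of theorem applied to \eqref{eq:characterstic} yields a unique global solution $\X_t(\x)$ for each $\x\in\R^n$; ODE uniqueness gives injectivity of $\X_t$, and solving the ODE backward in time produces a Lipschitz inverse. The Jacobian $J_t(\x):=det(\grad_\x\X_t(\x))$ satisfies Liouville's formula
\[
 \pa_t J_t(\x)=(\div\u)(\X_t(\x),t)\,J_t(\x),\qquad J_0(\x)=1,
\]
so that $J_t(\x)=\exp\bigl(\int_0^t(\div\u)(\X_s(\x),s)\,{\d}s\bigr)>0$, which together with the Lipschitz inverse confirms the diffeomorphism claim.

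For the pushforward identity, I would observe that by the change of variables $\y=\X_t(\x)$ the claim \eqref{eq:rhoPF} is equivalent to the pointwise Lagrangian identity
\[
 \rho(\X_t(\x),t)\,J_t(\x)=\rho_0(\x),\qquad \x\in\R^n,\ t\geq 0.
\]
To prove this, I would differentiate the left-hand side along the characteristic using the chain rule together with Liouville's formula:
\begin{align*}
 \frac{\d}{{\d}t}\bigl[\rho(\X_t(\x),t)\,J_t(\x)\bigr]
 &=\bigl[\pa_t\rho+\u\cdot\grad\rho+\rho\div\u\bigr](\X_t(\x),t)\,J_t(\x)\\
 &=\bigl[\pa_t\rho+\div(\rho\u)\bigr](\X_t(\x),t)\,J_t(\x)=0
\end{align*}
by \eqref{eq:density}. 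Since $\X_0=\Id$ and $J_0=1$, the identity propagates for all $t\geq0$; testing against any measurable $f$ and applying the change of variables then recovers \eqref{eq:rhoPF} in the pushforward form.

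There is no substantial obstacle here: the lemma is classical and the proof is essentially bookkeeping. The only care needed is that the positivity of $J_t$ established in the first step is precisely what makes the change of variables used in the second step rigorous, so the two steps must be carried out in this order.
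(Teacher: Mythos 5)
Your proof is correct and is precisely the standard argument (Picard--Lindel\"of for the flow, Liouville's formula for the Jacobian, and the Lagrangian identity $\rho(\X_t(\x),t)J_t(\x)=\rho_0(\x)$ propagated via the continuity equation) that the paper invokes implicitly when it declares the lemma ``elementary'' for Lipschitz $\u$ and gives no further details. There is nothing to correct; your write-up simply supplies the bookkeeping the paper omits.
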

The proof of the lemma is elementary if the flow $\u$ is Lipschitz.
Moreover, $\X_t$ is a diffeomorphism. We denote its inverse mapping
$\X_t^{-1}$.

Let us define another mapping $\bG$, which pushforwards the
Lebesgue measure $d\x$ to $\rho_0 d\x$, namely
\begin{equation}\label{eq:Gamma}
  \bG_\sharp \,1=\rho_0.
\end{equation}
We further define
\[\tilde{\X}_t:=\X_t\circ\bG.\]
From \eqref{eq:rhoPF} and \eqref{eq:Gamma}, we get
$(\tilde{\X}_t)_\sharp\, 1=\rho(\cdot,t)$.
Then, if $\bG$ is invertible, we have
\begin{equation}\label{eq:Xtildeinv}
  \left(\tilde{\X}_t^{-1}\right)_\sharp\,\rho(\cdot,t)=
  \big(\bG^{-1}\circ \X_t^{-1}\big)_\sharp\,\rho(\cdot,t)=1.
\end{equation}
Hence, if $\widetilde\X_t^{-1}$ has a gradient form, the corresponding
stream function is a solution of the Monge-Amp\`ere equation \eqref{eq:MApf}.

The following lemma show that $\widetilde\X_t^{-1}$ indeed has a gradient form,
under the radial symmetry.
\begin{lemma}\label{lem:Xinvgrad}
  Let $(\rho, \u, \phi)$ be a solution of \eqref{eqs:EMA} with radial
  symmetry \eqref{eq:radial}.
  Assume $\u$ is Lipschitz, and the initial density $\rho_0$ satisfies
  \begin{equation}\label{eq:rho0nonvac}
    0<\rho_{\min}(0)\leq\rho_0(\cdot)\leq\rho_{\max}(0)<+\infty.
  \end{equation}
  Then, there exists a pushforward mapping $\bG$ satisfying
  \eqref{eq:Gamma}.
  Also, there exists a radial function $\psi_t$, defined in \eqref{eq:psi}, such that
\begin{equation}\label{eq:Xtinvgrad}
  \widetilde\X_t^{-1}(\x)=\grad\psi_t(\x).
\end{equation}
 Moreover, $\bG$ and $\grad\psi_t$ are diffeomorphism.
\end{lemma}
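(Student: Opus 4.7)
The plan is to use radial symmetry to reduce $\bG$, $\X_t$, and their composition to scalar maps on $[0,\infty)$; once $\widetilde\X_t$ is a radial map $\x \mapsto \frac{\x}{|\x|}h_t(|\x|)$, its inverse is automatically the gradient of a radial potential obtained by integrating $h_t^{-1}$.

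I would first seek $\bG$ in the radial form $\bG(\x) = \frac{\x}{|\x|}\gamma(|\x|)$, noting that then $\bG = \grad\widetilde\psi$ with $\widetilde\psi(\x) := \int_0^{|\x|}\gamma(s)\,\d s$ and, by Lemma \ref{lem:eigenvalue}, $\det(D\bG) = \gamma'(r)\bigl(\gamma(r)/r\bigr)^{n-1}$. The pushforward condition \eqref{eq:Gamma} then reduces, via change of variables, to the ODE $\gamma'(r)\gamma(r)^{n-1}\rho_0(\gamma(r))=r^{n-1}$, which integrates to $e_0(\gamma(r)) = r^n/n$ in terms of the primitive $e_0(r) := \int_0^r s^{n-1}\rho_0(s)\,\d s$ from Lemma \ref{lem:rhoprimitive}. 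The non-vacuum assumption \eqref{eq:rho0nonvac} makes $e_0$ a $C^1$ bijection of $[0,\infty)$ onto itself with $e_0'(r)$ comparable to $r^{n-1}$, so this equation uniquely determines $\gamma$ as a $C^1$ diffeomorphism with $\gamma(0)=0$ and $\gamma'$ bounded above and below; hence $\bG$ is a global diffeomorphism, and a direct change of variables verifies $\bG_\sharp 1 = \rho_0$.

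Next, under radial symmetry the velocity field $\u(\x,t) = \frac{\x}{|\x|}u(|\x|,t)$ preserves rays, so the characteristic system \eqref{eq:characterstic} separates into the scalar flow $\partial_t R = u(R,t)$, $R_0(r)=r$, and $\X_t(\x) = \frac{\x}{|\x|}R_t(|\x|)$. Lipschitz continuity of $\u$ combined with the boundary condition $u(0,t)=0$ from \eqref{eq:radialzero} makes $R_t$ a diffeomorphism of $[0,\infty)$ fixing the origin. Consequently $\widetilde\X_t = \X_t\circ\bG$ is again radial, $\widetilde\X_t(\x) = \frac{\x}{|\x|}h_t(|\x|)$ with $h_t := R_t\circ\gamma$, and its inverse is $\widetilde\X_t^{-1}(\y) = \frac{\y}{|\y|}h_t^{-1}(|\y|)$. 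Setting
\[
\psi_t(\x) := \int_0^{|\x|} h_t^{-1}(s)\,\d s,
\]
we obtain $\grad\psi_t(\x) = \frac{\x}{|\x|}h_t^{-1}(|\x|) = \widetilde\X_t^{-1}(\x)$, which is \eqref{eq:Xtinvgrad}, and the diffeomorphism property of $\grad\psi_t$ follows from that of $h_t$.

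The main obstacle, and essentially the only delicate point, is the behavior at the origin: one must verify that $\gamma$, $R_t$, and hence $h_t$ are genuine diffeomorphisms of $[0,\infty)$ sending $0\mapsto 0$ with strictly positive one-sided derivatives, so that $\psi_t$ is $C^2$ on all of $\R^n$ rather than merely $C^1$ away from the origin. For $\gamma$ this is precisely where the uniform lower bound in \eqref{eq:rho0nonvac} is used; for $R_t$ it follows from $u(0,t)=0$ together with the Lipschitz bound on $\u$.
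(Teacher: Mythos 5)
Your proof is correct and follows essentially the same route as the paper: reduce $\bG$, $\X_t$ and $\widetilde\X_t$ to scalar radial profiles, pin down $\bG$ through the primitive $e_0(r)=\int_0^r s^{n-1}\rho_0(s)\,\d s$ (your relation $e_0(\gamma(r))=r^n/n$ is exactly the inverse of the paper's explicit formula $\Gamma^{-1}(r)=\big(n e_0(r)\big)^{1/n}$), and define $\psi_t$ by integrating the inverse radial profile of $\widetilde\X_t$, which reproduces \eqref{eq:psi}. The only cosmetic difference is that you verify the diffeomorphism property directly at the origin from the scalar maps, whereas the paper concludes it by propagating the density bounds \eqref{eq:rho0nonvac} to \eqref{eq:rhononvac}; both arguments are valid.
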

\begin{proof}
  First, we construct $\bG$. From Lamma \ref{lem:PFMA}, we can write \eqref{eq:Gamma}
  equivalently as
  \[det\big(\grad\bG^{-1}(\x)\big)=\rho_0.\]
  Under radial symmetry \eqref{eq:radial}, the mapping $\bG$ takes
  the following form
  \[\bG(\x)=\frac{\x}{r}\,\Gamma(r),\quad
  \bG^{-1}(\x)=\frac{\x}{r}\,\Gamma^{-1}(r), \quad r=|\x|.\]
  Indeed, apply Lemma \ref{lem:eigenvalue} with $f=\Gamma^{-1}$ and get
  \[
    det\big(\grad\bG^{-1}(\x)\big)=
    (\Gamma^{-1})'(r)\cdot\left(\frac{\Gamma^{-1}(r)}{r}\right)^{n-1}=\frac{\frac{d}{dr}\big(\Gamma^{-1}(r)^n\big)}{nr^{n-1}}=\rho_0.
  \]
  The last equality holds if we define 
  \[\Gamma^{-1}(r)=\left[\int_0^rns^{n-1}\rho_0(s)\,ds\right]^{\frac1n}.\]
  This completes the construction of $\bG^{-1}$. Moreover, as $\rho_0$
  satisfies \eqref{eq:rho0nonvac}, $\bG^{-1}$ is a diffeomorphism. So
  does $\bG$.

  Next, we construct $\psi_t$.
  The dynamics of the mapping $\widetilde\X_t$ reads
  \begin{equation}\label{eq:Xtildedyn}
   \pa_t\widetilde\X_t(\x)=\u(\widetilde\X_t(\x),t),\quad \widetilde\X_0(\x)=\bG(\x),
  \end{equation}
  Under radial symmetry \eqref{eq:radial}, $\widetilde\X_t$ take the form
  \[\widetilde\X_t(\x)=\frac{\x}{r}\,R_t(r), \quad
  \widetilde\X_t^{-1}(\x)=\frac{\x}{r}\,R_t^{-1}(r), \quad r=|\x|,\]
where $R_t$ satisfies
\[\pa_tR_t(r) = u(R_t(r),t),\quad R_0(r)=\Gamma^{-1}(r).\]

We define a radial function $\psi_t$ as follows
\begin{equation}\label{eq:psi}
  \psi_t(\x)=\psi_t(r)=\int_0^r R_t^{-1}(s)\,ds.
\end{equation}
Then, we can verify that $\psi_t$ satisfies \eqref{eq:Xtinvgrad}
\[\grad\psi_t(\x)=\pa_r\psi_t(r)\, \frac{\x}{r}=
  \frac{\x}{r}\,R_t^{-1}(r)=\widetilde\X_t^{-1}(\x).\]
Moreover, as $\u$ is a Lipschitz flow, we have
\begin{align*}
  &\rho_{\max}(t)\leq
  \rho_{\max}(0)\exp\left(\int_0^t\|\div\u(\cdot,s)\|_{L^\infty}\,ds\right)<+\infty,\\
  &\rho_{\min}(t)\geq
    \rho_{\min}(0)\exp\left(-\int_0^t\|\div\u(\cdot,s)\|_{L^\infty}\,ds\right)>0.
\end{align*}
This verifies the condition \eqref{eq:rhononvac}. Hence, $\grad\psi_t$
is a diffeomorphism.
\end{proof}

Combining \eqref{eq:Xtinvgrad} with \eqref{eq:Xtildeinv}, we find a
solution of \eqref{eq:MApf}, defined in \eqref{eq:psi}. This allows us
to obtain an explicit expression of the characteristic path $\X_t$.

\begin{proposition}
  Under the same assumptions as Lemma \ref{lem:Xinvgrad},
  the characteristic flow $\X_t$ satisfies
  \begin{equation}\label{eq:Xt}
    \X_t(\x) = \big(\x-\grad\phi_0(\x)\big)+\grad\phi_0(\x)\cos (\sqrt\kappa
    t)+\u_0(\x)\frac{\sin(\sqrt\kappa t)}{\sqrt\kappa}.
  \end{equation}
\end{proposition}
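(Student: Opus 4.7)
The plan is to reduce the characteristic ODE to a constant-coefficient linear oscillator by exploiting the geometric representation of $\phi$ established in Lemma \ref{lem:Xinvgrad}. First, I would differentiate the defining relation $\partial_t\X_t(\x)=\u(\X_t(\x),t)$ once more in $t$ and apply the momentum equation \eqref{eq:velo} with $\F=-\kappa\grad\phi$ along the characteristic. Since $\frac{d}{dt}\u(\X_t(\x),t)=(\pa_t\u+\u\cdot\grad\u)(\X_t(\x),t)=-\kappa\grad\phi(\X_t(\x),t)$, this yields the second-order ODE
\[
\ddot\X_t(\x)=-\kappa\grad\phi(\X_t(\x),t),\qquad \X_0(\x)=\x,\quad \dot\X_0(\x)=\u_0(\x).
\]

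Next, I would insert the representation \eqref{eq:phipsi}, namely $\grad\phi(\y,t)=\y-\grad\psi_t(\y)$, together with the pushforward identity \eqref{eq:Xtinvgrad}, $\grad\psi_t=\widetilde\X_t^{-1}=\bG^{-1}\circ\X_t^{-1}$. The crucial observation is that the composition with $\X_t$ collapses the time dependence:
\[
\grad\psi_t(\X_t(\x))=\bG^{-1}(\X_t^{-1}(\X_t(\x)))=\bG^{-1}(\x),
\]
which is independent of $t$. Evaluating this identity at $t=0$ and using $\X_0=\mathrm{id}$ together with $\grad\phi_0(\x)=\x-\grad\psi_0(\x)$ gives the explicit formula $\bG^{-1}(\x)=\x-\grad\phi_0(\x)$. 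Substituting back, the characteristic ODE becomes the linear harmonic oscillator
\[
\ddot\X_t(\x)=-\kappa\bigl(\X_t(\x)-(\x-\grad\phi_0(\x))\bigr).
\]

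Finally, I would set $\Y_t(\x):=\X_t(\x)-(\x-\grad\phi_0(\x))$, which satisfies $\ddot\Y_t=-\kappa\Y_t$ with initial data $\Y_0(\x)=\grad\phi_0(\x)$ and $\dot\Y_0(\x)=\u_0(\x)$. Solving this scalar ODE componentwise produces
\[
\Y_t(\x)=\grad\phi_0(\x)\cos(\sqrt\kappa\, t)+\u_0(\x)\,\frac{\sin(\sqrt\kappa\, t)}{\sqrt\kappa},
\]
and adding back $\x-\grad\phi_0(\x)$ yields exactly \eqref{eq:Xt}.

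The main conceptual step is the time-independence of $\grad\psi_t\circ\X_t$; once that is in hand, everything else is a routine ODE calculation. The possible subtleties are regularity issues---verifying that $\X_t$ and $\bG$ are genuine diffeomorphisms so that the composition $\widetilde\X_t^{-1}\circ\X_t=\bG^{-1}$ is justified pointwise---but these are already delivered by Lemma \ref{lem:Xinvgrad} under the standing assumptions (Lipschitz velocity and non-vacuum initial density). No smallness hypothesis is needed at this stage since the formula is derived along a given smooth solution.
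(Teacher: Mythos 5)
Your argument is correct and follows essentially the same route as the paper: differentiating the characteristic ODE, using \eqref{eq:phipsi} and \eqref{eq:Xtinvgrad} to collapse $\grad\psi_t\circ\X_t=\bG^{-1}$ into a time-independent map, identifying $\bG^{-1}(\x)=\x-\grad\phi_0(\x)$ at $t=0$, and solving the resulting harmonic oscillator. The only cosmetic difference is your explicit substitution $\Y_t=\X_t-\bG^{-1}$, which the paper handles by solving the inhomogeneous equation directly.
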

\begin{proof}
 Let us first calculate
  \begin{align}
    \pa_t^2\X_t(\x)=&\,\pa_t\big(\u(\X_t(\x),t)\big)
    =\pa_t\u(\X_t(\x),t)+\grad\u(\X_t(\x),t) \pa_t\X_t(\x)\nonumber\\
    =&\, \pa_t\u(\X_t(\x),t)+\u(\X_t(\x),t)\cdot\grad\u(\X_t(\x),t)
     =-\kappa\grad\phi(\X_t(\x),t).\label{eq:Xtt}
  \end{align}
  Then apply the relation \eqref{eq:phipsi} and get
  \begin{equation}\label{eq:gradphi}
    \grad\phi(\X_t(\x),t)=\X_t(\x)-\grad\psi_t(\X_t(\x))=\X_t(\x)-\bG^{-1}(\x).
  \end{equation}
  Here, we have used \eqref{eq:Xtinvgrad}, so that
  \[\grad\psi_t\circ\X_t=\widetilde\X_t^{-1}\circ\X_t=\bG^{-1}\circ\X_t^{-1}\circ\X_t=\bG^{-1}.\]
  Therefore, $\X_t$ satisfies the following second order equation
  \[
      \pa_t^2\X_t(\x)=-\kappa\X_t(\x)+\kappa\bG^{-1}(\x),\quad
      \X_0(\x)=\x,\quad \pa_t\X_0(\x)=\u_0(\x).
    \]
    It can be solved explicitly, resulting
    \[   \X_t(\x) = \bG^{-1}(\x)+(\x-\bG^{-1}(\x))\cos (\sqrt\kappa
      t)+\u_0(\x)\frac{\sin(\sqrt\kappa t)}{\sqrt\kappa}.\]
    Moreover,  we apply \eqref{eq:gradphi} with $t=0$ and obtain
    $\bG^{-1}(\x)=\x-\grad\phi_0(\x)$. It leads to the formula \eqref{eq:Xt}.
\end{proof}

\begin{corollary}[Energy conservation]\label{cor:localenergy}
  Given any bounded set $\Omega\subset\R^n$, define energy
  \[E(t) =  \frac{1}{2}\int_{\X_t(\Omega)}\rho(\x,t)
    \big(|\u(\x,t)|^2+\kappa|\grad\phi(\x,t)|^2\big)\,d\x.\]
  Then, $E(t)$ is conserved in time.
\end{corollary}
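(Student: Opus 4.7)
The plan is to reduce $E(t)$ to a Lagrangian integral over the fixed domain $\Omega$, then show that the integrand along characteristics is independent of $t$, so that $E(t)\equiv E(0)$.

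First, I would perform the change of variables $\y=\X_t(\x)$ in the definition of $E(t)$. Since $\u$ is Lipschitz, Lemma \ref{lem:PF} tells us that $\X_t$ is a diffeomorphism with $(\X_t)_\sharp \rho_0=\rho(\cdot,t)$, so the change-of-variables identity
\[
\int_{\X_t(\Omega)}\rho(\y,t)\,g(\y)\,\d\y
=\int_{\Omega}\rho_0(\x)\,g(\X_t(\x))\,\d\x
\]
applies with $g(\y)=\tfrac{1}{2}\bigl(|\u(\y,t)|^2+\kappa|\grad\phi(\y,t)|^2\bigr)$. This recasts $E(t)$ as an integral over the fixed set $\Omega$ against the fixed measure $\rho_0\,\d\x$, so conservation will follow from pointwise (in $\x$) conservation of the integrand.

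Next, I would compute the two composed quantities explicitly using the proposition. From $\pa_t\X_t(\x)=\u(\X_t(\x),t)$ and \eqref{eq:Xt},
\[
\u(\X_t(\x),t)=-\sqrt{\kappa}\,\grad\phi_0(\x)\,\sin(\sqrt{\kappa}\,t)+\u_0(\x)\,\cos(\sqrt{\kappa}\,t),
\]
while \eqref{eq:gradphi} together with \eqref{eq:Xt} gives
\[
\grad\phi(\X_t(\x),t)=\X_t(\x)-\bG^{-1}(\x)=\grad\phi_0(\x)\,\cos(\sqrt{\kappa}\,t)+\frac{\u_0(\x)}{\sqrt{\kappa}}\,\sin(\sqrt{\kappa}\,t).
\]
Forming $|\u(\X_t(\x),t)|^2+\kappa|\grad\phi(\X_t(\x),t)|^2$ and using $\sin^2+\cos^2=1$, the cross terms cancel and we obtain the pointwise identity
\[
|\u(\X_t(\x),t)|^2+\kappa|\grad\phi(\X_t(\x),t)|^2=|\u_0(\x)|^2+\kappa|\grad\phi_0(\x)|^2.
\]

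Combining the two steps,
\[
E(t)=\tfrac{1}{2}\int_{\Omega}\rho_0(\x)\bigl(|\u_0(\x)|^2+\kappa|\grad\phi_0(\x)|^2\bigr)\,\d\x=E(0),
\]
which is the claim. There is no real obstacle: all the work has already been absorbed into the preceding proposition, and the only thing to verify is the algebraic cancellation of cross terms in the energy density along a trajectory, which is immediate from the simple-harmonic form of $\X_t$.
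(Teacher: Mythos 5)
Your argument is correct, and the first step coincides with the paper's: both pull the integral back to the fixed domain $\Omega$ against the reference measure $\rho_0(\x)\,\d\x$ via Lemma \ref{lem:PF}. The second halves differ. The paper differentiates $E(t)$ in time and shows $E'(t)=0$ using only the second-order ODE \eqref{eq:Xtt}, $\pa_t^2\X_t=-\kappa\grad\phi(\X_t,t)$, together with the observation from \eqref{eq:gradphi} that $\grad\phi(\X_t(\x),t)-\X_t(\x)=-\bG^{-1}(\x)$ is time-independent, hence $\pa_t\grad\phi(\X_t(\x),t)=\pa_t\X_t(\x)$; this is the standard Hamiltonian-energy argument for the oscillator, carried out without ever solving it. You instead substitute the closed-form solution \eqref{eq:Xt} and verify by the identity $\sin^2+\cos^2=1$ that the energy density $\tfrac12\big(|\pa_t\X_t(\x)|^2+\kappa|\grad\phi(\X_t(\x),t)|^2\big)$ is constant along each individual trajectory (the cross terms do cancel, as you claim). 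Your route proves the slightly stronger pointwise statement and makes the simple-harmonic structure transparent, at the cost of relying on the explicit formula \eqref{eq:Xt}, which is available only because $\bG^{-1}$ (equivalently $\pi_t=\mathrm{id}$) is time-independent in the radial setting; the paper's differentiation argument is marginally more robust in that it needs only the ODE structure, not its explicit solution. Both are complete and legitimate proofs.
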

\begin{proof}
  First, we apply Lemma \ref{lem:PF} and write
  \begin{equation}\label{eq:localenergy}
    E(t)=\frac{1}{2}\int_\Omega\big(|\pa_t\X_t(\x)|^2
    +\kappa|\grad\phi(\X_t(\x),t)|^2\big)\rho_0(\x)\,d\x.
  \end{equation}
  Then,
  \begin{align*}
    E'(t) = &\, \int_\Omega\Big[\pa_t\X_t(\x)\cdot\pa_t^2\X_t(\x)+
    \kappa\grad\phi(\X_t(\x),t)\cdot \pa_t\grad\phi(\X_t(\x),t)\Big]\rho_0(\x)\,d\x\\
= &\, \int_\Omega \pa_t\X_t(\x)\cdot\Big[\pa_t^2\X_t(\x)+
    \kappa\grad\phi(\X_t(\x),t)\Big]\rho_0(\x)\,d\x=0.
  \end{align*}
  Here, in the penultimate equality, we apply \eqref{eq:gradphi} and
  get $\pa_t\grad\phi(\X_t(\x),t)=\pa_t\X_t(\x)$. The last equality
  follows from \eqref{eq:Xtt}.
\end{proof}

Taking spatial gradient of \eqref{eq:Xt} would yield
  \begin{equation}\label{eq:Xtgrad}
    \grad\X_t(\x) =\big(\Id-D^2\phi_0(\x)\big)+D^2\phi_0(\x)\cos (\sqrt\kappa
    t)+\grad\u_0(\x)\frac{\sin(\sqrt\kappa t)}{\sqrt\kappa}.
  \end{equation}
We can recover the critical threshold condition that we obtained
through the analysis of the spectral dynamics.

\begin{theorem}
  $\grad\X_t(\x)$ remains positive definite in all time, if and only if
  the initial condition satisfies \eqref{eq:CTsub}.
\end{theorem}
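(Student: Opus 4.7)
The approach is to fully diagonalise the explicit formula \eqref{eq:Xtgrad} in the common radial eigenbasis. First I would observe that $\u_0(\x)=(\x/r)u_0(r)=\grad U_0(\x)$ with $U_0(r):=\int_0^r u_0(s)\,\d s$, so $\grad\u_0=D^2U_0$ is itself a radial Hessian. Two applications of Lemma \ref{lem:eigenvalue} then show that $D^2\phi_0(\x)$ and $\grad\u_0(\x)$ are both symmetric and share the eigenvectors $\{\x,\x^\perp\}$, with respective radial/tangential eigenvalues $(\mu_0,\nu_0)=(\phi_0''(r),\phi_0'(r)/r)$ and $(p_0,q_0)=(u_0'(r),u_0(r)/r)$ at $r=|\x|$. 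Hence the right-hand side of \eqref{eq:Xtgrad} is an affine combination of pairwise-commuting symmetric matrices, and $\grad\X_t(\x)$ is itself symmetric with two distinct eigenvalues
\begin{align*}
\Lambda_{\text{rad}}(r,t)&=(1-\mu_0)+\mu_0\cos(\sqrt\kappa\,t)+\tfrac{p_0}{\sqrt\kappa}\sin(\sqrt\kappa\,t),\\
\Lambda_{\text{tan}}(r,t)&=(1-\nu_0)+\nu_0\cos(\sqrt\kappa\,t)+\tfrac{q_0}{\sqrt\kappa}\sin(\sqrt\kappa\,t).
\end{align*}
Positive-definiteness of $\grad\X_t(\x)$ for every $t$ is therefore equivalent to $\Lambda_{\text{rad}}(r,\cdot)>0$ and $\Lambda_{\text{tan}}(r,\cdot)>0$ throughout $\R$.

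The next step exploits the elementary identity $\min_\theta(a\cos\theta+b\sin\theta)=-\sqrt{a^2+b^2}$ to compute
\[
\inf_{t\in\R}\Lambda_{\text{rad}}(r,t)=(1-\mu_0)-\sqrt{\mu_0^2+p_0^2/\kappa},
\]
which is strictly positive iff $(1-\mu_0)^2>\mu_0^2+p_0^2/\kappa$, equivalently $p_0^2<\kappa(1-2\mu_0)$; note that this automatically forces $\mu_0<1/2$, so the sign requirement $1-\mu_0>0$ comes for free. Rewriting, $\Lambda_{\text{rad}}(r,\cdot)>0$ throughout time iff $|u_0'(r)|<\sqrt{\kappa(1-2\phi_0''(r))}$, which is exactly \eqref{eq:CTsub}. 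An identical computation yields $\Lambda_{\text{tan}}(r,\cdot)>0$ throughout time iff $|q_0(r)|<\sqrt{\kappa(1-2\nu_0(r))}$.

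The main --- and really the only non-routine --- step is to show that the \emph{radial} subcriticality \eqref{eq:CTsub} already implies its \emph{tangential} counterpart, so that no extra condition need be imposed. Using the boundary data $u_0(0)=\phi_0'(0)=0$ from \eqref{eq:radialzero}, I would write $q_0(r)=\frac1r\int_0^r p_0(s)\,\d s$ and $\nu_0(r)=\frac1r\int_0^r\mu_0(s)\,\d s$ and then apply Cauchy--Schwarz (or Jensen) to obtain
\[
q_0(r)^2\le\frac1r\int_0^r p_0(s)^2\,\d s<\frac1r\int_0^r\kappa\bigl(1-2\mu_0(s)\bigr)\d s=\kappa\bigl(1-2\nu_0(r)\bigr).
\]
Putting everything together, \eqref{eq:CTsub} implies $\grad\X_t(\x)$ is positive definite for all $t\ge0$ and all $\x$. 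Conversely, if \eqref{eq:CTsub} fails at some $r_\ast>0$, then $\Lambda_{\text{rad}}(r_\ast,\cdot)$ must vanish at some finite $t$, so positive-definiteness is lost at the points of radius $r_\ast$. Without the averaging step just described, the theorem could only be phrased as a conjunction of two threshold inequalities, and the clean single-inequality form \eqref{eq:CTsub} would be lost.
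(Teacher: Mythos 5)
Your argument is correct, and its first two steps --- diagonalizing \eqref{eq:Xtgrad} in the common radial eigenbasis supplied by Lemma \ref{lem:eigenvalue}, and then minimizing each sinusoid $a+b\cos(\sqrt\kappa\,t)+c\sin(\sqrt\kappa\,t)$ to convert positivity for all $t$ into the quadratic inequality $\lambda_i(\grad\u_0)^2<\kappa\big(1-2\lambda_i(D^2\phi_0)\big)$ --- are exactly the paper's proof. Where you genuinely diverge is the final reduction from the \emph{pair} of threshold inequalities (radial and tangential) to the single condition \eqref{eq:CTsub}. The paper dispatches this by invoking ``the same argument in Theorem \ref{thm:CT}'', i.e.\ the linear averaging behind Lemma \ref{lem:gradu}: since $q_0(r)=\frac1r\int_0^r p_0(s)\,\d s$ and $\nu_0(r)=\frac1r\int_0^r\mu_0(s)\,\d s$, the tangential eigenvalue is itself a radial average, $\Lambda_{\mathrm{tan}}(r,t)=\frac1r\int_0^r\Lambda_{\mathrm{rad}}(s,t)\,\d s$, so strict positivity of the radial eigenvalue at all radii forces positivity of the tangential one with no further inequality needed. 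You instead prove the stronger pointwise statement that the radial threshold implies the tangential threshold, via Cauchy--Schwarz/Jensen applied to those same averages. Both closures are valid and rest on the same boundary data \eqref{eq:radialzero}; yours is self-contained and yields the extra information that $(q_0,\nu_0)$ is automatically subcritical in the sense of \eqref{eq:CTqnu} (consistent with Proposition \ref{prop:CTqnu}), while the paper's linear averaging is marginally more economical because it never passes through the quadratic inequality. Your treatment of the converse direction and of the sign condition $1-\mu_0>0$ being automatic is also correct.
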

\begin{proof}
  Since $\grad\X_t(\x)$ is symmetric, it is positive definite if and
  only if the eigenvalues $\lambda_i(\grad\X_t(\x))>0$.

  From Lemma \ref{lem:eigenvalue}, $\grad\X_t(\x),
  D^2\phi_0(\x)$ and $\grad\u_0(\x)$ all share the same
  eigenvectors. Therefore, \eqref{eq:Xtgrad} implies
  \[\lambda_i(\grad\X_t(\x)) =\big(1-\lambda_i(D^2\phi_0(\x))\big)+\lambda_i(D^2\phi_0(\x))\cos (\sqrt\kappa
    t)+\lambda_i(\grad\u_0(\x))\frac{\sin(\sqrt\kappa
      t)}{\sqrt\kappa}.\]
  Hence, $\lambda_i(\grad\X_t(\x))>0$ if and only if
  \[\lambda_i(D^2\phi_0(\x))^2+\frac{\lambda_i(\grad\u_0(\x))^2}{\kappa}<
    \big(1-\lambda_i(D^2\phi_0(\x))\big)^2,\]
  or equivalently
  \[\lambda_i(\grad\u_0(\x))^2<\kappa\left(1-2\lambda_i(D^2\phi_0(\x))\right).\]
  This is precisely \eqref{eq:CTpmu} and \eqref{eq:CTqnu} for $i=1,2$ respectively.

  Finally, the equivalency to \eqref{eq:CTsub} follows through the
  same argument in Theorem \ref{thm:CT}.
\end{proof}

\section{Beyond radial symmetry: the 2D system with swirl}\label{sec:nonradial}
We have established the global wellposedness theory for the EMA system
with radial symmetry \eqref{eq:radial}. A natural question is what
happens we do not impose radial symmetry.
In this section, we briefly discuss potential extensions of our theory
to more general data.

A major difficulty of implementing the spectral dynamics analysis to
the general data is, $\grad\u$ and $\grad\F$ do not necessarily share
the same eigenvectors, so the forcing term in \eqref{eq:lambF} can be
hard to control.

\subsection{2D radial EMA system with swirl}
Let us consider the following type of solutions in 2D
\begin{equation}\label{eq:radialswirl}
  \rho(\x,t)=\rho(r,t),\quad
  \u(\x,t)=\u_1(\x,t)+\u_2(\x,t)=\frac{\x}{r}u(r,t)+\frac{\x^\perp}{r}\Theta(r,t),
  \end{equation}
where $\Theta$ characterizes the rotation. Under the setup, although $\grad\u$
does not share eigenvectors as $\grad\F$, the component $\grad\u_1$
does. In fact, we can decompose $\grad\u$ by the symmetric part
$\grad\u_1$ and anti-symmetric part $\grad\u_2$ and study their spectral
dynamics separately. Elementary calculation yields the dynamics of
$(p,q,\mu,\nu)$ together with $(\Theta_r, \frac{\Theta}{r})$ as follows
\begin{equation}\label{eq:six}
    \begin{cases}
    q' = -q^2-\kappa \nu+\left(\frac{\Theta}{r}\right)^2,\\
    \nu' = q(1-\nu),\\
    \left(\frac{\Theta}{r}\right)'=-2q\frac{\Theta}{r},
  \end{cases}
  \text{and}\quad
    \begin{cases}p' = -p^2-\kappa \mu+2\Theta_r\frac{\Theta}{r}-\left(\frac{\Theta}{r}\right)^2,\\
    \mu' = p(1-\mu),\\
    \Theta_r'=-(p+q)\Theta_r-(p-q)\frac{\Theta}{r}.
  \end{cases}
\end{equation}
Global wellposedness follows from the solvability of the closed ODE
system, with 6 variables.
\begin{definition}\label{def:Sigma}
  Define a set $\Sigma\subset\R^6$ so that
  $\sigma_0=\Big(p_0,q_0,(\Theta_r)_0, (\frac{\Theta}{r})_0, \mu_0,\nu_0\Big)\in\Sigma$
  if and only if the ODE system \eqref{eq:six} with initial condition
  $\sigma_0$ is bounded globally in time.
\end{definition}

Clearly, if the initial data is subcritical, satisfying
\eqref{eq:2DCTcond}, the boundedness of $\sigma(t)$ implies the
boundedness of $\grad\u$. Then the solution is globally regular.
This finishes the proof of Theorem \ref{thm:2D}.

A natural question is, whether we can find an explicit formulation of
the subcritical region $\Sigma$, similar as what we have done for the system
without swirl. So, we shall examine the ODE system \eqref{eq:six}.

Note that the dynamics of $(q,\nu, \frac{\Theta}{r})$ form a closed
system. Compared with \eqref{eq:qnu}, we observe that the presence of
$\frac{\Theta}{r}$ helps to avoid $q\to-\infty$.
This phenomenon is known as \emph{rotation prevents finite-time
  breakdown}, which has been studied in \cite{liu2004rotation}.
More precisely, we state the following result.

\begin{proposition}
  Consider the dynamics $(q,\nu, \frac{\Theta}{r})$ with initial
  conditions $\nu(0)<1$ and $\tfrac{\Theta}{r}(0)\neq0$. Then,
  the solution $(q,\nu, \frac{\Theta}{r})$ remains bounded in all time.
\end{proposition}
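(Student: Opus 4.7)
The plan is to exploit an algebraic invariant forced by the system, reduce the three-dimensional ODE to a one-dimensional conservative mechanical system, and read off global boundedness from its energy integral.

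Set $\omega(t) := \tfrac{\Theta}{r}(t)$ for brevity. The second and third equations of \eqref{eq:six} read $(1-\nu)' = -q(1-\nu)$ and $\omega' = -2q\omega$, both of which integrate explicitly along the characteristic. Since $\nu_0<1$ and $\omega_0\neq 0$, both $1-\nu$ and $\omega$ retain their initial signs for as long as the solution exists, and dividing the two expressions eliminates the integrating factor $\exp\bigl(-\int_0^t q\bigr)$ to produce the algebraic invariant
\begin{equation*}
\omega(t) = C\bigl(1-\nu(t)\bigr)^2, \qquad C := \frac{\omega_0}{(1-\nu_0)^2}\neq 0.
\end{equation*}

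Next, I apply the same change of variables used in Proposition \ref{prop:CTqnu}, namely $w := q/(1-\nu)$ and $v := 1/(1-\nu)$, which is legitimate because $1-\nu$ stays strictly positive. A direct computation, identical in structure to that of Proposition \ref{prop:CTqnu} except for the extra forcing $\omega^2$, which via the invariant equals $C^2/v^4$, yields
\begin{equation*}
v' = w, \qquad w' = \kappa(1-v) + \frac{C^2}{v^3}.
\end{equation*}
Thus $v$ obeys the scalar Newton equation $v'' = \kappa(1-v) + C^2/v^3$, which is Hamiltonian. Multiplying by $v'=w$ and integrating produces the conserved energy
\begin{equation*}
\mathcal{E}(t) := w(t)^2 + \kappa\bigl(v(t)-1\bigr)^2 + \frac{C^2}{v(t)^2} \equiv \mathcal{E}(0).
\end{equation*}

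Each of the three non-negative terms in $\mathcal{E}$ is thereby bounded by $\mathcal{E}(0)$. The singular term $C^2/v^2$ forces $v \geq |C|/\sqrt{\mathcal{E}(0)}>0$; the harmonic term $\kappa(v-1)^2$ yields a uniform upper bound on $v$; and $w^2$ is bounded outright. Translating back, $\nu = 1-1/v$ stays in a compact subinterval of $(-\infty,1)$, $q = w/v$ is bounded, and $\omega = C/v^2$ is bounded, which is precisely the claimed global-in-time boundedness of $(q,\nu,\Theta/r)$.

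The only subtle point, and the one that makes the statement work, is the \emph{role of $C\neq 0$}: when $C=0$ the centrifugal barrier disappears and the system collapses to the planar dynamics \eqref{eq:qnu}, whose orbits may hit $v=0$ and blow up precisely when \eqref{eq:CTqnu} fails. With $C\neq 0$ the repulsive $1/v^2$ potential unconditionally prevents $v$ from reaching $0$, regardless of the size of the initial data, which is the analytic incarnation of the Liu--Tadmor principle that rotation prevents finite-time breakdown \cite{liu2004rotation}.
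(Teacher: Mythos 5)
Your proof is correct and follows essentially the same route as the paper: the same change of variables $w=q/(1-\nu)$, $v=1/(1-\nu)$, the same invariant $\tfrac{\Theta}{r}v^2=\mathrm{const}$ (you derive it directly from the exponential integrating factors rather than from the $(w,v)$ system, but it is the identical quantity), and the same conserved energy $w^2+\kappa(1-v)^2+C^2v^{-2}$ whose centrifugal term keeps $v$ away from $0$. No substantive difference from the paper's argument.
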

\begin{proof}
  We follow Proposition \ref{prop:CTqnu} and define $(w,v)$ as
  \eqref{eq:wv}.
  The dynamics reads
  \[
    \begin{cases}
    w' = \kappa(1-v) + \left(\frac{\Theta}{r}\right)^2v,\\
    v' = w.
  \end{cases}
  \]
  To understand the influence from $\frac{\Theta}{r}$, we observe the
  following conserved quantity
  \[\left(\frac{\Theta}{r} v^2\right)'=
    -2q \frac{\Theta}{r}\cdot v^2 + \frac{\Theta}{r}\cdot 2v w
    =-\frac{\Theta}{r}\cdot 2v(qv-w)=0.\]
  This implies
  \[\frac{\Theta}{r}  = C_0 v^{-2},\quad \text{where}\quad
    C_0=\frac{\Theta}{r}(0) v(0)^2\neq0.\]
  It leads to the following closed system for $(w,v)$
  \[
    \begin{cases}
    w' = \kappa(1-v) + C_0^2v^{-3},\\
    v' = w.
  \end{cases}
  \]
  We obtain an invariant quantity
  \[\Big(w^2+\kappa(1-v)^2+C_0^2v^{-2}\Big)'=2w\cdot
    \big(\kappa(1-v)+C_0^2v^{-3}\big)+
    \big(-2\kappa (1-v)-2C_0^2v^{-3}\big)\cdot
    w = 0.\]
  So, we have
  \[w^2+\kappa(1-v)^2+C_0^2v^{-2}=C:=w_0^2+\kappa(1-v_0)^2+C_0^2v_0^{-2},\]
  where the constant $C>0$ is a finite number when $\nu(0)<1$.
  Clearly, $w$ is bounded with $|w|\leq\sqrt{C}$. Also,
  $\kappa(1-v)^2+C_0^2v^{-2}\leq C$ implies $v>0$ and $v$ is bounded.
  The boundedness of $(q,\nu,\frac{\Theta}{r})$ then follows as a
  direct consequence.
\end{proof}

Under radial symmetry \eqref{eq:radial}, the dynamics of $(p,\mu)$ is
the same as $(q,\nu)$. This is however not the case with swirl. In
fact, the dynamics of $(p,\nu,\Theta_r)$ does not even form a closed
system. It is unclear whether there is an explicit critical threshold condition on the
initial data that leads to the boundedness of the six quantities.

\subsection{General non-symmetric data}
Consider the EMA system \eqref{eqs:EMA} with general initial data. We found it difficult to
trace its spectral dynamics since the eigen-structure of  the gradient force is not accessible through any obvious time-invariant quantities. We shall
briefly discuss the alternative geometric approach \cite{brenier2004geometric, loeper2005quasi}.

Without radial symmetry, the flow map $\widetilde\X_t^{-1}$ might not have
a gradient form. Lemma \ref{lem:Xinvgrad} no longer holds. To find a
solution $\psi_t$ for \eqref{eq:MApf}, we make use of the celebrated
polar factorization by Bernier \cite{brenier1991polar}, and decompose
\[\widetilde\X_t=\grad\Phi_t\circ\pi_t,\]
where $\pi_t$ is a measure-preserving pushforward mapping, namely
$(\pi_t)_\sharp\, 1=1$. We get
\[(\grad\Phi_t)_\sharp\,1 =
  (\widetilde\X_t)_\sharp\,1=\rho(\cdot,t)\quad\Rightarrow\quad
  \big((\grad\Phi_t)^{-1}\big)_\sharp\,\rho(\cdot,t)=1.\]
Take $\psi_t$ to be the Legendre transformation of
$\Phi_t$, so that $\grad\psi_t=(\grad\Phi_t)^{-1}$.
Then, $\psi_t$ is a solution of the Monge-Amp\`ere equation
\eqref{eq:MApf}.

\begin{proposition}
  Let $(\rho,\u,\phi)$ be a solution of
  \eqref{eqs:EMA}. Assume $\rho$ satisfies
  \eqref{eq:rhononvac} and $\u$ is Lipschitz. Then, $\X_t$ solves the
  following differential equation
  \begin{equation}\label{eq:Xttgen}
      \pa_t^2\X_t(\x)=-\kappa\X_t(\x)+\kappa\,\pi_t\circ\bG^{-1}(\x),\quad
      \X_0(\x)=\x,\quad \pa_t\X_0(\x)=\u_0(\x).
    \end{equation}
    \begin{proof}
      The polar factorization $\X_t\circ\bG=\grad\Phi_t\circ\pi_t$
      implies $\psi_t\circ\X_t=\pi_t\circ\bG^{-1}$. We apply
      \eqref{eq:Xtt} and calculate
      \[\pa_t^2\X_t(\x)=-\kappa\grad\phi(\X_t(\x),t)=-\kappa
      \big(\X_t(\x)-\grad\psi_t(\X_t(\x))\big)=-\kappa
      \big(\X_t(\x)-\pi_t(\bG^{-1}(\x))\big).\]
    \end{proof}
  \end{proposition}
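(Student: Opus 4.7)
The plan is to reduce the claim to the momentum equation rewritten along characteristics, and then use the polar factorization of $\widetilde\X_t$ to express $\grad\psi_t\circ\X_t$ in the computable form $\pi_t\circ\bG^{-1}$. The initial conditions $\X_0(\x)=\x$ and $\pa_t\X_0(\x)=\u_0(\x)$ are built into the definition \eqref{eq:characterstic} of the characteristic flow, so the substantive content is the second-order ODE.

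First, I would observe that the identity
\[
\pa_t^2\X_t(\x)=-\kappa\,\grad\phi(\X_t(\x),t)
\]
derived in \eqref{eq:Xtt} only used the chain rule together with the velocity equation $\pa_t\u+(\u\cdot\grad)\u=-\kappa\grad\phi$ and the definition of $\X_t$; it made no appeal to radial symmetry. So this identity remains available in the present general setting. Next, from the representation \eqref{eq:phipsi} of the Monge-Amp\`ere potential, $\grad\phi(\y,t)=\y-\grad\psi_t(\y)$, and thus
\[
\pa_t^2\X_t(\x)=-\kappa\,\X_t(\x)+\kappa\,\grad\psi_t(\X_t(\x)).
\]
All that remains is to prove the key reduction
\begin{equation}\label{eq:keyreduction}
\grad\psi_t\circ\X_t=\pi_t\circ\bG^{-1}.
\end{equation}

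For \eqref{eq:keyreduction} I would invoke Brenier's polar factorization applied to $\widetilde\X_t=\X_t\circ\bG$, which under the non-vacuum hypothesis \eqref{eq:rhononvac} and Lipschitz $\u$ produces a convex potential $\Phi_t$ and a measure-preserving map $\pi_t$ with $\widetilde\X_t=\grad\Phi_t\circ\pi_t$. Taking the Legendre transform gives $\psi_t$ with $\grad\psi_t=(\grad\Phi_t)^{-1}$ on the image of $\grad\Phi_t$. Composing on the right with $\pi_t$ yields $\grad\psi_t\circ\grad\Phi_t\circ\pi_t=\pi_t$, i.e., $\grad\psi_t\circ\widetilde\X_t=\pi_t$, and then composing on the right with $\bG^{-1}$ delivers \eqref{eq:keyreduction}. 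Substituting into the displayed identity above completes the proof.

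The main obstacle is the justification of the polar factorization in this dynamic setting: one needs $\widetilde\X_t$ to be invertible and $\grad\Phi_t$ to be genuinely invertible on the support of $\pi_{t\sharp}1$ so that the Legendre-dual identity $\grad\psi_t=(\grad\Phi_t)^{-1}$ holds pointwise and can be composed with $\X_t$. These follow from \eqref{eq:rhononvac} propagated in time (which, as recorded in Lemma~\ref{lem:Xinvgrad}, comes from the Lipschitz hypothesis on $\u$ via the continuity equation), ensuring $\grad\Phi_t$ is a diffeomorphism; the rest is bookkeeping with the definitions of pushforward, composition, and the characteristic flow.
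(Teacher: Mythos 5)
Your proposal is correct and follows essentially the same route as the paper: apply \eqref{eq:Xtt}, substitute $\grad\phi = \mathrm{id} - \grad\psi_t$ from \eqref{eq:phipsi}, and use the polar factorization $\widetilde\X_t = \grad\Phi_t\circ\pi_t$ with $\grad\psi_t = (\grad\Phi_t)^{-1}$ to obtain $\grad\psi_t\circ\X_t = \pi_t\circ\bG^{-1}$. Your write-up is in fact slightly more careful than the paper's (which abbreviates this composition identity, writing $\psi_t\circ\X_t$ where $\grad\psi_t\circ\X_t$ is meant) and correctly flags the invertibility of $\grad\Phi_t$ as the point needing the non-vacuum hypothesis.
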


  Unlike the radially symmetric case where $\pi_t(\x)=\x$, the
  measure-preserving mapping $\pi_t$ can vary at different time $t$.
  Therefore, we are not able to obtain an explicit solution of $\X_t$
  from \eqref{eq:Xttgen}. Moreover, it is unclear whether $\pi_t$ is a
  diffeomorphism, or it could lose differentiability at some finite
  time. This has a big impact towards the regularity
  of the solution. We will leave the study of the regularity
  properties of $\pi_t$ in future investigation.

\bibliographystyle{plain}
\bibliography{Eulerian}

\end{document}